\pgfplotsset{compat=1.18}
\newcommand\solidrule[1][15pt]{\rule[0.5ex]{#1}{1pt}}
\newcommand\dashedrule{\mbox{%
		\solidrule[3pt]\hspace{3pt}\solidrule[3pt]\hspace{3pt}\solidrule[3pt]}}
\newtheorem{prop}{Proposition}[section]
\newtheorem{rmk}[prop]{Remark}
\newtheorem{conjecture}[prop]{Conjecture}
\newtheorem{lem}[prop]{Lemma}
\theoremstyle{definition}
\crefname{equation}{}{}
\crefname{prop}{proposition}{propositions}
\crefname{lem}{lemma}{lemmas}
\crefname{cor}{corollary}{corollaries}
\crefname{defn}{definition}{definitions}
\crefname{appsec}{appendix}{appendices}
\Crefname{prop}{Proposition}{Propositions}
\Crefname{lem}{Lemma}{Lemmas}
\Crefname{cor}{Corollary}{Corollaries}
\Crefname{defn}{Definition}{Definitions}
\Crefname{appsec}{Appendix}{Appendices}
\newcommand{\ddt}[1]{\frac{\mathrm{d}#1}{\mathrm{d}t}}
\newcommand{\ld}[1]{\mathcal{L}_f #1}
\begin{document}
\title{Computation of minimal periods\\for ordinary differential equations}
\author{Jeremy P. Parker$^1$, David Goluskin$^2$}
\date{$^1$Division of Mathematics, University of Dundee, Dundee, DD1 4HN, United Kingdom\\
$^2$Department of Mathematics and Statistics, University of Victoria, Victoria, BC, V8P 5C2, Canada}

\maketitle

\begin{abstract}
A framework is presented for lower-bounding periods among periodic solutions to an autonomous dynamical system governed by ordinary differential equations. For a chosen dynamical system, lower bounds can be proved by constructing auxiliary functions that, similarly to Lyapunov functions, satisfy a certain inequality pointwise on state space. Different formulations can give bounds applying either to all periodic solutions or to only periodic solutions with chosen symmetry. In the case of differential equations that are polynomial in the state variables, we present computational methods that use semidefinite programming to construct auxiliary functions. Furthermore, we give an algorithm to rigorously validate the numerically computed bounds via rational arithmetic. To illustrate these methods, computations are carried out for two chaotic systems that each have an infinite number of periodic solutions: the Lorenz system, which is dissipative, and the H\'enon--Heiles system, which is Hamiltonian. All computed bounds are validated with rational arithmetic. Separate bounds are computed that apply to all periodic solutions, and to only periodic solutions with certain symmetries. In all cases, our best validated bounds agree with periods of known periodic solutions to at least 5 digits, which strongly suggests exact sharpness of our framework for these examples. The question of how broadly our framework is sharp is discussed, but it remains open.
\end{abstract}

\maketitle

\section{Introduction}

Periodic orbits (POs) that satisfy autonomous systems of ordinary differential equations (ODEs) amount to closed curves in state space that integrate the vector field defined by the ODE's right-hand side. A strictly positive lower bound on the periods of all POs can be deduced if the vector field has certain regularity: upper bounds on the speed and curvature along every ODE solution naturally restrict how quickly an orbit can close on itself. This intuition was made quantitative by \citet{yorke1969periods}, who proved a lower bound on the period of a PO $x(t)$ evolving in $\mathbb{R}^n$ according  $\ddt{}x(t)=f(x(t))$. If the function $f$ is globally Lipschitz continuous with constant $L$,
% meaning that $|f(x)-f(y)|\leq L|x-y|$ for all $x,y\in\mathbb{R}^n$, 
then any PO must have a period of at least $2\pi/L$. Yorke's argument uses Fenchel's theorem, which states that any smooth closed curve has total curvature of at least $2\pi$. The argument also uses a pointwise worst-case bound on curvature in terms of $L$, but in no other way does it use the fact that trajectories integrate the $f(x)$ vector field.

Yorke's argument \citep{yorke1969periods} has been generalized in various ways, including to dynamics in Hilbert spaces and Banach spaces \citep{busenberg1986better,robinson2006minimal,zevin2009minimal,cheng2009generalized,zevin2012minimal,nieuwenhuis2014minimal, herzog2024minimal}, and to prove a positive lower bound on the period of solutions to the 3D Navier--Stokes equations   \citep{kukavica1994absence}. On Hilbert spaces, the lower bound $2\pi/L$ is sharp in the sense that there exists a vector field having Lipschitz constant $L$ and a PO with period $2\pi/L$. For any particular dynamical system, however, the bound is typically not sharp, meaning that the infimum of periods among all POs is strictly larger than $2\pi/L$.

The present work develops methods for proving sharper lower bounds on periods of POs for any particular ODE system. Rather than using a pointwise worst-case estimate on curvature, such as Yorke's estimate by the Lipschitz constant, we make more use of the fact that orbits must integrate the vector field $f(x)$. The approach is inspired by observations that (1) total curvature of a PO can be expressed as the time average of a certain quantity along that orbit, and (2) methods have recently been developed whereby sharp bounds on time averages in ODE systems can be computed using polynomial optimization, in particular with sum-of-squares constraints \citep{chernyshenko2014polynomial,fantuzzi2016bounds}. Our methods can be used either to prove bounds that apply to all POs, or to only POs that possess a desired symmetry. Previous results by Yorke and others based on the Lipschitz constant are recovered as suboptimal special cases.

Our approach relies on constructing an auxiliary function, defined on the state space, subject to a certain pointwise inequality that implies the desired bound on orbit period. This is similar to the use of Lyapunov functions to verify stability, although they are subject to different pointwise inequalities that imply different conclusions. The general version of our framework is broadly applicable to ODEs, at least in principle, but in practice it can be very hard to carry out. However, in the case where the vector field $f(x)$ is polynomial, the search for an auxiliary function subject to the desired inequality is more tractable. In particular, the inequality can be enforced by a polynomial sum-of-squares (SOS) constraint, and the resulting SOS optimization problem can be recast as a semidefinite program (SDP). An SDP is a standard type of convex optimization problem that can be solved numerically using floating point arithmetic. Furthermore, if a computer-assisted proof is desired, numerical SOS or SDP solutions can be subsequently validated using interval arithmetic \citep{jansson2008rigorous,goluskin2018bounding,parker2024lorenz} or rational arithmetic \citep{peyrl2008computing,kaltofen2012exact,magron2021exact,dostert2021exact,davis2024rational}. Here we validate SDP solutions using rational arithmetic, by a strategy similar to that of \citet{peyrl2008computing}.

One motivation for determining the shortest period among POs is to confirm, after various POs have been found numerically, that the shortest PO is among them. Finding the shortest PO, or several of the shortest POs with different symmetries, can be useful for several reasons. First, a short PO may constitute a good control objective in some applications, such as in the design of orbital missions for spacecraft. Second, the unstable periodic orbits embedded in a chaotic attractor offer ways to study the chaotic dynamics, including by expanding chaotic averages as weighted sums over the POs \citep{cvitanovic1991periodic}. The shortest POs are the most heavily weighted in such expansions. They are also important when searching for trajectories that maximize the time averages of various quantities \citep{yang2000optimal,goluskin2018bounding}.

In \cref{sec:thm}, we derive the inequality on state space that implies a lower bound on the periods of POs. We also show how this inequality can be enforced by an SDP constraint in the case of polynomial ODEs, and how bounds can be made specific to POs with desired symmetry. In \cref{sec:examples}, our computational approach based on SDPs is used to prove computer-assisted lower bounds on orbit periods in the H\'enon--Heiles system and the Lorenz system. The first system displays Hamiltonian chaos, and the second displays dissipative chaos. For each system, our computations (with code provided) validate lower bounds on period are sharp to five digits, as confirmed by known POs whose periods match the bounds to this precision. \Cref{sec:conc} gives concluding remarks.

\section{Convex problems giving lower bounds on periods}
\label{sec:thm}

In what follows we consider ODE dynamical systems
\begin{equation}
        \ddt{}x(t) = f(x(t)) \quad \text{on}\quad \Omega\subset\mathbb{R}^n,
    \label{eq:system}
\end{equation}
where $f\in C^1(\Omega,\mathbb{R}^n)$, meaning that $f$ is a continuously differentiable mapping from $\Omega$ to $\mathbb{R}^n$. The differentiability of $f$ ensures that trajectories $x(t)$ of \cref{eq:system} are unique and $C^1$. We assume that $x(t)$ remains in $\Omega$ for all $t\ge 0$ -- i.e., that $\Omega$ is forward invariant under the flow generated by \cref{eq:system}. In general we do not assume that $\Omega$ is bounded, nor that each $x(t)$ is bounded uniformly for positive times.

This section presents our main theoretical results and methods. \Cref{sec: general bound} gives the general framework for lower-bounding periods of POs of~\cref{eq:system}. \Cref{sec: SDP general} derives SDPs to compute these bounds in the case where $f(x)$ is polynomial. For ODEs with symmetries, \cref{sec: sym} explains how to exploit symmetries computationally, and how to find bounds that apply only to POs that are invariant under a symmetry.

\subsection{Variational problems over differentiable auxiliary functions}
\label{sec: general bound}

The conditions we derive for lower-bounding periods of POs rely on three ingredients. The first is that one can construct a vector function $\Phi(x)$ of any dimension, defined on $\Omega$, such that each component of $\Phi$ time-integrates to zero over each PO of interest. If $\Phi=f \cdot D W$ for any $W\in C^1(\Omega,\mathbb{R}^m)$, then it has zero mean over any PO; this follows from integrating the identity $\Phi(x(t))=\tfrac{\rm d}{{\rm d}t}W(x(t))$ over one period. Additional mean-zero $\Phi$ can be constructed using symmetry principles if the POs of interest have a symmetry, as described later. The second ingredient, which also underlies previous lower bounds on orbit period, is that mean-zero functions satisfy Wirtinger's inequality. \Cref{lem:wirtinger} below states the simple version of Wirtinger's inequality on intervals that we require, wherein $|\cdot|$ denotes the usual Euclidean norm. Note that this lemma concerns general functions $\Phi(t)$ that are not necessarily induced by ODEs. We do not include a proof since it follows directly from the special case with $T=2\pi$ and $m=1$ that is proved in standard references \citep[section 7.7]{hardy1952inequalities}.

\begin{lem}[Wirtinger's inequality]
\label{lem:wirtinger}
For any $T>0$ and $\Phi\in  C^1([0,T],\mathbb{R}^m)$ with $\int_0^T \Phi(t)\,\mathrm{d}t = 0$ and $\Phi(T)=\Phi(0)$,
\begin{equation}
    \int_0^T |\Phi(t)|^2 \mathrm{d}t \leq \left(\frac{T}{2\pi}\right)^2 \int_0^T \left|\ddt{\Phi}\right|^2 \mathrm{d}t.
\end{equation}
\end{lem}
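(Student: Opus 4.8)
The plan is to prove the vector-valued Wirtinger inequality by reducing it to the scalar case via Fourier series, which is the most transparent route given the two hypotheses (zero mean and periodicity).

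The plan is to prove the vector-valued inequality by Fourier analysis, which handles the zero-mean and periodicity hypotheses transparently. First I would note that since $f(T)=f(0)$, each component $f_j$ extends to a continuous $T$-periodic function on $\mathbb{R}$, and since $f\in C^1$ its derivative is continuous. I would expand each scalar component in a complex Fourier series on $[0,T]$, writing $f_j(t)=\sum_{n\in\mathbb{Z}} c_{n,j}\,e^{2\pi i n t/T}$, and record that the hypothesis $\int_0^T f(t)\,\mathrm{d}t=0$ forces $c_{0,j}=0$ for every $j$.

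The key step is to relate the Fourier coefficients of $\dot f$ to those of $f$. Integrating by parts on $[0,T]$, the coefficient of $\dot f_j$ at frequency $n$ equals $\frac{2\pi i n}{T}c_{n,j}$, where the boundary term vanishes precisely because $f_j(T)=f_j(0)$. Applying Parseval's identity to both sides then gives
\[
\int_0^T |f(t)|^2\,\mathrm{d}t = T\sum_{j=1}^m\sum_{n\neq 0}|c_{n,j}|^2, \qquad \int_0^T |\dot f(t)|^2\,\mathrm{d}t = T\left(\frac{2\pi}{T}\right)^2\sum_{j=1}^m\sum_{n\neq 0} n^2 |c_{n,j}|^2,
\]
where the sums omit $n=0$ by the zero-mean condition. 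Since $n^2\geq 1$ for every $n\neq 0$, comparing the two right-hand sides term by term yields $\int_0^T|\dot f|^2 \geq (2\pi/T)^2\int_0^T|f|^2$, which rearranges to the claimed bound.

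I expect the only real obstacle to be the analytic bookkeeping around convergence: justifying that Parseval applies and that the Fourier series of $\dot f$ may be obtained by term-by-term differentiation. The $C^1$ hypothesis is exactly what makes this clean, since both $f$ and $\dot f$ lie in $L^2([0,T])$ so that Parseval holds, while the integration-by-parts identity for the coefficients needs only continuity of $\dot f$ together with the periodic boundary condition. As an alternative route that sidesteps the vector structure entirely, one could instead invoke the scalar Wirtinger inequality (the case $m=1$, as in the cited reference) for each component $f_j$ -- whose hypotheses hold componentwise -- and simply sum the resulting inequalities over $j$, using $|f|^2=\sum_j f_j^2$ and $|\dot f|^2=\sum_j \dot f_j^2$; I would mention this as the shortest derivation if the scalar statement is taken as given.
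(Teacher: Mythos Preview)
Your Fourier-series argument is correct and is the standard proof of Wirtinger's inequality; the alternative of applying the scalar case componentwise and summing is equally valid. Note, however, that the paper does not actually prove this lemma: it is simply recalled as a classical result with a citation to \S7.7 of Hardy--Littlewood--P\'olya, so there is no paper proof to compare against. Your approach is essentially the one found in that reference.
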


The third ingredient is a method for bounding time averages in ODE systems by constructing auxiliary functions $V\in C^1(\Omega)$ that take values in $\mathbb{R}$ and satisfy a certain inequality pointwise on phase space \citep{fantuzzi2016bounds}. With these three ingredients we obtain our general framework for bounding orbit periods, which is stated by \cref{thm:main} and explained in the proof. We make repeated use of the notation $\ld$ for the Lie derivative with respect to the flow generated by $f$, whose application to $\Phi$ gives
\begin{equation}
 \ld \Phi (x)=f(x)\cdot D\Phi(x) = \sum_{i=1}^xf_i(x)\frac{\partial\Phi}{\partial x_i},
\end{equation} 
where $D\Phi$ is the gradient of $\Phi\in C^1(\Omega)$ with respect to $x$, and where the dot denotes a sum over the components of $f_i$, as made explicit by the second equality. Crucially, $\ld \Phi$ can be evaluated without knowing any trajectories, yet it satisfies $\ddt{} \Phi(x(t)) = \ld\Phi(x(t))$ for any trajectory $x(t)$ of~\cref{eq:system}.

\begin{prop}
\label{thm:main}
Consider $\ddt{}x(t) = f(x(t))$ with $f\in C^1(\Omega,\mathbb{R}^n)$ and $\Omega\subset\mathbb{R}^n$ forward invariant. Suppose there exist $B>0$, $V\in C^1(\Omega, \mathbb{R})$ and $\Phi\in C^1(\Omega, \mathbb{R}^m)$ such that
\begin{equation}
    B\left| \Phi(x) \right|^2 - \left| \ld \Phi(x)\right|^2 + \ld V(x) \geq 0 \quad \forall~x\in\Omega.
\label{eq:inequality}
\end{equation}
Suppose $x(t)$ is $T$-periodic for some $T>0$. If $\Phi(x)$ satisfies the mean-zero condition $\int_0^T\Phi(x(t))\,\mathrm{d}t=0$ but does not vanish everywhere on $x(t)$, then 
\begin{equation}
T\geq 2\pi/\sqrt{B}. 
\label{eq:general bound}
\end{equation}
\end{prop}

\begin{proof}
Suppose that $B$, $V$ and $\Phi$ satisfy the assumptions, and that $x(t)$ is a $T$-periodic orbit on which $\Phi$ satisfies the assumptions. The inequality~\cref{eq:inequality} means that
    \begin{equation}
    B\left|\Phi(x(t)) \right|^2 - \left| \ddt{}\Phi(x(t))\right|^2 + \ddt{}V(x(t)) \geq 0
    \label{eq:inequality2}
    \end{equation}
holds along ODE trajectories at all $t$. Integrating over one period gives
    \begin{equation}
    B\int_0^T \left| \Phi(x(t)) \right|^2 \mathrm{d}t - \int_0^T \left| \ddt{}\Phi(x(t)) \right|^2 \mathrm{d}t\geq 0,
    \end{equation}
where the last term in~\cref{eq:inequality2} has vanished since $V(x(t))$ is $T$-periodic. Rearranging gives
    \begin{equation}
        \frac{\int_0^T \left| \ddt{}\Phi(x(t)) \right|^2 \mathrm{d}t}{\int_0^T \left| \Phi(x(t)) \right|^2 \mathrm{d}t} \leq B,
        \label{eq:inequality3}
    \end{equation}
where the denominator is strictly positive under the assumption that $\Phi(x(t))$ does not vanish identically. (Note that $t\mapsto\Phi$ is continuous.) Because $\Phi(x(t))$ is assumed to be mean-zero over period $T$, \cref{lem:wirtinger} gives
    \begin{equation}
        \left(\frac{2\pi}{T}\right)^2 \leq \frac{\int_0^T \left| \ddt{}\Phi(x(t)) \right|^2 \mathrm{d}t}{\int_0^T \left| \Phi(x(t)) \right|^2 \mathrm{d}t}.
        \label{eq:inequality4}
    \end{equation}
Combining \cref{eq:inequality3,eq:inequality4} and rearranging gives the claim~\cref{eq:general bound}.
\end{proof}

The lower bound~\cref{eq:general bound} from \cref{thm:main} is largest when $B$ is smallest. One naturally hopes to optimize the bound over $\Phi$ and $V$ for which the proposition's assumptions holds. A simple suboptimal choice is to let $V=0$ identically, and to let $\Phi=f$, which indeed has zero mean on all POs without vanishing identically. In this case, if $f$ is globally Lipschitz continuous with constant $L$, then the operator norm of the Jacobian is bounded by $L$, and so $|\ld f| \leq \left\|Df\right\| |f| \leq L |f|$  implies that the assumptions of \cref{thm:main} are satisfied with $B=L^2$. The resulting bound is $T\geq 2\pi/L$ for any periodic orbit, which is exactly the seminal result of \citet{yorke1969periods}. The additional freedom to optimize over nonzero $V\neq0$ and other $\Phi$ will give sharper bounds in general, including in cases where $f$ is not globally Lipschitz continuous. Such optimization is the object of the next subsection.

\subsection{Semidefinite programs for polynomial auxiliary functions}
\label{sec: SDP general}

We wish to maximize the lower bound~\cref{eq:general bound} over some class of $\Phi$ and $V$ that satisfy the assumptions of \cref{thm:main}. Imposing the assumptions exactly in such an optimization is generally not possible because it would require knowing the ODE trajectories. Instead we impose stronger but more tractable conditions that imply the assumptions of \cref{thm:main}. 

The pointwise inequality~\cref{eq:inequality} is not convex in $\Phi$, which is an impediment to efficient optimization, but we can replace $\Phi$ with a matrix variable $\mathsf Q$ that will appear affinely in the constraint. To do this, we choose an ordered basis $a(x)=(a_1(x),\ldots,a_m(x))$ whose components $a_i$ each satisfy the conditions for $\Phi$ -- they each have zero mean on a given class of POs but do not all vanish identically. We then seek each component of $\Phi$ from the span of this basis, meaning $a\in C^1(\Omega,\mathbb{R}^m)$ and $\Phi(x)=\mathsf{U}a(x)$ for some square matrix $\mathsf{U}$. This implies that $|\Phi(x)|^2=a(x)^\mathsf{T}\mathsf{Q}a(x)$ for an $m\times m$ symmetric matrix $\mathsf{Q}\succeq0$, where $^\mathsf T$ denotes the transpose and $\succeq$ denotes positive semidefiniteness. We further require that $\mathsf Q$ is strictly positive definite, denoted $\mathsf{Q}\succ0$, so that the $\Phi(x)$ vector vanishes only where the basis $a(x)$ does. After choosing $a$, optimizing $\Phi$ amounts to optimizing $\mathsf Q\succ0$. The resulting optimization problem is formulated as~\cref{eq:Ba def} in the first part of \cref{thm:general} below. Its constraints are easier to impose than the assumptions of \cref{thm:main}, but the problem is typically still intractable. Even if $V$ were fixed, rather than optimized over the infinite-dimensional space $C^1$, verifying the pointwise constraint in~\cref{eq:Ba def} would be intractable in general. This can be overcome by further strengthening the constraints, at least when all expressions are polynomial.

For the remainder of this subsection, we suppose that the ODE right-hand side $f(x)$ is polynomial in the components of $x$, and we choose the basis vector $a(x)$ and the auxiliary function $V(x)$ to be polynomial also. In this case, the constraint in the optimization problem~\cref{eq:Ba def} amounts to nonnegativity of a polynomial expression on the set $\Omega$. Such a constraint can be strengthened into a more tractable SOS condition in a standard way that we now describe. Our description in terms of SOS polynomials has a dual perspective in terms of moments of measures~\cite{lasserre2009moments}, and in the present case the relevant measures are dynamically invariant as in~\cite{korda2021convex}.

For a polynomial $p:\mathbb{R}^n\to\mathbb{R}$ to satisfy $p(x)\ge0$ for all $x\in\mathbb{R}^n$, it suffices that $p\in \Sigma$, where $\Sigma$ denotes the set of polynomials that can be represented as a sum of squares of other polynomials. 
%The set of nonnegative polynomials strictly contains $\Sigma$, and the relationship between these sets has been extensively studied \citep{reznick2000some}. 
If instead one wants to enforce $p(x)\ge0$ only on a subset $\Omega\subset\mathbb{R}^n$ without requiring nonnegativity on all of $\mathbb{R}^n$, this is possible if $\Omega$ is semialgebraic, meaning it can be specified by a finite number of polynomial inequalities or equalities. Let $\mathbb{R}[x]$ and $\mathbb{R}^n[x]$ denote the sets of real polynomials of $x$ taking values in $\mathbb{R}$ and $\mathbb{R}^n$, respectively. Then any semialgebraic $\Omega$ can be specified as
\begin{equation}
\label{eq:semialg set}
\Omega = \left\{ x\in\mathbb{R}^n~|~g_i(x)\ge0\text{ for }i=1,\ldots,I,~h_j(x)=0\text{ for }j=1,\ldots,J \right\},
\end{equation}
where all $g_i,h_j\in\mathbb{R}[x]$. To guarantee nonnegativity on $\Omega$, we employ a set of polynomials $\Sigma^\Omega$ called the quadratic module generated by the $g_i$ and $h_j$ polynomials. This quadratic module is defined as
\begin{equation}
\label{eq:quad module}
\Sigma^\Omega = \left\{ \sigma_0+\sum\limits_{i=1}^I \sigma_ig_i+\sum\limits_{j=1}^{J}\rho_jh_j~|~\sigma_i\in\Sigma\text{ for }i=0,\ldots,I,~\rho_j\in\mathbb{R}[x]\text{ for }j=1,\ldots,J\right\}.
\end{equation}
Any polynomial in $\Sigma^\Omega$ must be nonnegative on $\Omega$ because $\sigma_0(x)\ge0$ at every $x\in\mathbb{R}^n$, while each $\sigma_i(x)g_i(x)\ge0$ and each $\rho_j(x)h_j(x)=0$  at every $x\in\Omega$. When $\Omega=\mathbb{R}^n$, there are no $g_i$ or $h_j$, so $\Sigma^\Omega=\Sigma$.

In the present case where all functions are polynomial, the pointwise nonnegativity constraint in~\cref{eq:Ba def} can be replaced by the stronger constraint of membership in $\Sigma^\Omega$. This straightforward SOS relaxation of~\cref{eq:Ba def} yields~\cref{eq:Bsos def}, in the second part of \cref{thm:general} below. To state the proposition, we let $\mathcal{P}$ denote the set of all possible orbit periods,
\begin{equation}
\label{eq: P set}
\mathcal{P}=\{ T>0~|~\text{there exists a non-stationary $T$-periodic ODE solution $x(t)$} \}.
\end{equation}
\Cref{thm:general} is aimed at bounding all periods in $\mathcal P$, rather than restricting to POs with certain symmetry. In order for the underlying \cref{thm:main} to apply to all POs, we choose the basis for $\Phi$ to be of the form $a(x) =\ld w(x)$ for some $w$ because such vectors have zero mean over all POs. We ensure that $a(x)$ does not vanish identically over any PO by including $f(x) = \ld x$ as $n$ of the components of $a$; these components can all vanish only at stationary points. Applying \cref{thm:main} in this context leads to \cref{thm:general}.
\begin{prop}
\label{thm:general}
Consider $\ddt{}x(t) = f(x(t))$ with $f\in C^1(\Omega,\mathbb{R}^n)$ and $\Omega\subset\mathbb{R}^n$ forward invariant. Let $w\in C^2(\Omega,\mathbb{R}^m)$ and $a(x) = \ld w(x)$ with $m\ge n$, where the first $n$ components of $w$ are $w_i=x_i$.
\begin{enumerate}[leftmargin=*]
\item The set of all orbit periods is bounded below by
    \begin{equation}
        \inf_\mathcal{P} T \ge \frac{2\pi}{\sqrt{B^a}},
        \label{eq:general result}
    \end{equation}
where
\begin{equation}
    B^a = 
     \inf_{\substack{\mathsf{Q} \succ0 \\V \in C^1(\Omega)}} B \quad\text{s.t.}\quad B a^\mathsf{T}\mathsf{Q} a - \left(\ld a\right)^\mathsf{T} \mathsf{Q} \left(\ld a\right)  + \ld V \ge 0 \quad\forall x \in\Omega.
     \label{eq:Ba def}
\end{equation}

\item Suppose further that $f\in \mathbb{R}^n[x]$, $w\in \mathbb{R}^m[x]$,  and $\Omega$ has the semialgebraic specification~\cref{eq:semialg set} with quadratic module $\Sigma^\Omega$. The set of all orbit periods is bounded below by
    \begin{equation}
        \inf_\mathcal{P} T \ge \frac{2\pi}{\sqrt{B^a_\Sigma}},
        \label{eq:general result SOS}
    \end{equation}
where
\begin{equation}
B^a_\Sigma = 
     \inf_{\substack{\mathsf{Q} \succ0 \\V \in \mathbb{R}[x]}} B \quad\text{s.t.}\quad B a^\mathsf{T}\mathsf{Q} a - \left(\ld a\right)^\mathsf{T} \mathsf{Q} \left(\ld a\right)  + \ld V \in \Sigma^\Omega.
     \label{eq:Bsos def}
    \end{equation}
\end{enumerate}
\end{prop}
\begin{proof}
For the first part, consider a PO of period $T$, with an associated non-stationary trajectory $x(t)$. Let $(B,\mathsf{Q},V)$ satisfy the constraint of \cref{eq:Ba def}. It is enough to show that $T\ge 2\pi/\sqrt{B}$, from which relation the claim follows by taking the infimum over $T$ and the supremum over $B$.

Let $\mathsf{U}\in\mathbb R^{m\times m}$ be a nonsingular matrix satisfying $\mathsf{Q}=\mathsf{U}^\mathsf{T}\mathsf{U}$, which is possible since $\mathsf{Q}\succ0$. Define $\Phi(x) = \mathsf{U}a(x)$, in which case $\Phi\in C^1(\Omega,\mathbb{R}^m)$. The desired lower bound on $T$ follows from \cref{thm:main} once we verify that $(\Phi,V,B)$ satisfy that proposition's conditions. First, the condition~\cref{eq:inequality} follows from the constraint of ~\cref{eq:Ba def} after noting that
\begin{equation}
    |\Phi(x)|^2 = a(x)^\mathsf{T}\mathsf{Q}a(x) \quad\text{and}\quad |\ld\Phi(x)|^2 = \left(\ld a(x)\right)^\mathsf{T}\mathsf{Q}(\ld a(x)).
\end{equation}
Second, $\Phi$ satisfies the mean-zero constraint on all POs since
\begin{equation}
    \int_0^T\Phi(x(t))\,\mathrm{d}t= \int_0^T\mathsf{U} \ld w(x(t))\,\mathrm{d}t = \mathsf{U}\int_0^T \ddt{} w(x(t))\,\mathrm{d}t = 0.
\end{equation}
Finally, to see that $\Phi(x(t))$ cannot vanish identically, suppose it does. This implies that $a(x(t))$ vanishes identically, since $\mathsf{U}$ is nonsingular. The first $n$ components of $a$ are $f=\ld x$, so $a(x(t))$ can vanish only if $x(t)$ is stationary, which contradicts our assumptions. Therefore, all conditions of \cref{thm:main} hold, and the first part is proved.

In the case where $f$ is polynomial, and $V$ and $w$ are chosen to be polynomial, the second part of the proposition follows from the first part by strengthening the polynomial nonnegativity constraint in~\cref{eq:Ba def} into the SOS constraint in~\cref{eq:Bsos def}. The stronger constraint guarantees $B^a\le B^a_\Sigma$, and so $1/\sqrt{B^a}\ge 1/\sqrt{B^a_\Sigma}$.
\end{proof}

The minimization~\cref{eq:Bsos def}, which gives an upper bound on orbit period via~\cref{eq:general result SOS}, is still not quite tractable. For a computable problem, we strengthen the constraint in~\cref{eq:Bsos def} to require membership in a finite-dimensional truncation of the set $\Sigma^\Omega$. This is done by choosing finite-dimensional spaces from which to seek the polynomials $\sigma_i$ and $\rho_j$ that represent polynomials in $\Sigma^\Omega$ as in~\cref{eq:quad module}. Specifically, we choose polynomial basis vectors $b_i$ such that each $\sigma_i = b_i^\mathsf{T} \mathsf{P}_i b_i$, which is made SOS by requiring $\mathsf{P}_i\succeq 0$, and we choose basis vectors $c_j$ such that each $\rho_j=r_j^\mathsf{T}c_j$ for some real vector $r_j$. We also choose a finite basis $c$ for $V$, so that $V=v^\mathsf{T}c$. The constraint in \cref{eq:Bsos def} then becomes
\begin{equation}
    \label{eq:sdp equality}
    B a^\mathsf{T}\mathsf{Q} a - \left(\ld^2 a\right)^\mathsf{T} \mathsf{Q} \left(\ld^2 a\right)  + v^\mathsf{T}\ld c 
= b_0^\mathsf{T} \mathsf{P}_0 b_0 + \sum_{i=1}^I b_i^\mathsf{T} \mathsf{P}_i b_i g_i + \sum_{j=1}^J r_j^\mathsf{T}c_j h_j.
\end{equation}
By expanding the polynomials on both sides of \cref{eq:sdp equality} in the same basis and matching coefficients at any fixed $B$, one obtains a linear relation between $\mathsf Q$, each $\mathsf{P}_i$, $v$ and each $r_j$. This relation implies the SOS constraint in~\cref{eq:Bsos def}, and in general it is a stronger constraint since we have chosen a truncation of $\Sigma^\Omega$. With the constraint in~\cref{eq:Bsos def} strengthened, we obtain
\begin{equation}
\label{eq:sos optimisation problem}
B_\Sigma^a\le 
\inf_{\substack{\mathsf{Q}\in \mathbb{S}^{m}~\\
\mathsf{P}_i\in\mathbb{S}^{m_i}~\\
v\in\mathbb{R}^{n_0}\\
r_j\in\mathbb{R}^{n_j}~}} B\quad s.t.\quad
\begin{array}[t]{l}
(\mathsf Q,\mathsf{P}_i,v,r_j)\text{ satisfy the linear equality implied by }\cref{eq:sdp equality}\\
\mathsf{Q}\succ 0\text{ and all }\mathsf{P}_i\succeq0.
\end{array}
\end{equation}
Here $\mathbb{S}^m$ denotes the set of $m\times m$ symmetric matrices, $m$ is the size of the basis $a$, $m_i$ is the size of $b_i$, $n_0$ is the size of $c$, and $n_j$ is the size of $c_j$. Enlarging the bases $(b_i,c,c_j)$ amounts to choosing a higher-dimensional truncation of $\Sigma^\Omega$ when strengthening the SOS constraint in~\cref{eq:Bsos def}. As this truncation approaches $\Sigma^\Omega$ itself, the right-hand side of~\cref{eq:sos optimisation problem} converges to~$B^a_\Sigma$.

The right-hand side of~\cref{eq:sos optimisation problem} is nearly an SDP. In an SDP, matrices can be constrained to be semidefinite, and equality constraints are permitted if they are affine in the variables being optimized. To obtain an SDP, we enforce $\mathsf Q\succeq0$ rather than strict definiteness, and verify \emph{a posteriori} that $\mathsf{Q}\succ0$. (Alternatively, one could impose $\mathsf{Q}\succeq \epsilon I$ in the SDP for small fixed $\varepsilon$, but we find better results imposing $\mathsf{Q}\succeq0$ to allow arbitrarily small positive eigenvalues.) We also must fix $B$ since the equality constraint in~\cref{eq:sos optimisation problem} is not jointly affine in $B$ and $\mathsf Q$. Thus, rather than solving an SDP optimization problem to minimize $B$, we fix $B$ and solve an SDP feasibility problem: we confirm that the constraint set is non-empty by finding $(\mathsf Q,\mathsf{P}_i,v,r_j)$ that satisfy the constraints. This SDP feasibility computation is then repeated at multiple $B$ values to approximate the desired infimum in \cref{eq:sos optimisation problem}. The smallest $B$ at which one can confirm the SDP's feasibility gives the best lower bound $T\ge 2\pi/\sqrt{B}$ on orbit period. 

Standard numerical methods for finding feasible SDP solutions do not rigorously confirm feasibility due to rounding error. Numerical SDP solutions will slightly violate the equality constraints. Furthermore, the matrices that are supposed to be positive semidefinite can sometimes have slightly negative eigenvalues. This happens especially when $B$ is close to its optimal value, which separates the cases where the constraint set is empty or not -- i.e., where the SDP is infeasible or feasible. A near-optimal value of $B$ computed by the previous paragraph's procedure may still suffice in many applications, despite the rounding error. However, one can go further and confirm feasibility of the SDP at a given $B$ value using validated numerics. \Cref{alg:general} is a procedure for validating a $B$ value that is as close as possible to the exact infimum in~\cref{eq:sos optimisation problem}. The validation strategy is similar to that of \citet{parker2024lorenz}, except it uses rational arithmetic instead of interval arithmetic because, in our implementations, validation by rational arithmetic was significantly faster. The idea behind the algorithm is to project the approximate solution returned by the SDP solver to a rational solution for which the equality constraint in \cref{eq:sdp equality} is satisfied exactly, and which is as close as possible in Euclidean distance to the approximate solution. We then verify that the resulting matrices are truly positive definite. This is close in spirit to the approach of \citet{peyrl2008computing}, but our method computes a full exact kernel for the linear equality relation \cref{eq:sdp equality} and rationalizes the floating point result within this kernel. A similar idea is used in the more sophisticated method of \citet{dostert2021exact}. The result of our algorithm is a computer-assisted proof of a lower bound on orbit period. All of our computational examples in \cref{sec:HH} use this validated algorithm or variant that is modified for symmetric orbits using the ideas of the next subsection.

\begin{algorithm}[p]
\caption{Verified computation of lower bounds on periods using rational arithmetic.}
\label{alg:general}
    \vspace{6pt}Inputs: $f\in\mathbb{R}^n[x]$, $a\in\mathbb{R}^m[x]$, $c\in\mathbb{R}^{n_0}[x]$, $b_i \in \mathbb{R}^{m_i}[x]$ for $i=0,\dots,I$, and $c_j\in\mathbb{R}^{n_j}[x]$ for $j=1,\dots,J$, all with integer coefficients.\\[6pt]
    Output: $B\in\mathbb{Q}$ such that $\inf_\mathcal{P} T \ge \frac{2\pi}{\sqrt{B}}$.
    \begin{enumerate}
    \item Fix a first value of $B\in\mathbb{Q}$.
    \item Attempt to solve the SDP feasibility problem~\cref{eq:sos optimisation problem}, using floating point arithmetic and with the constraint $tr\,\mathsf{Q}=1$ included to fix the variables' magnitudes. 
    \item If the SDP solution fails to converge, or if it returns a solution that does not satisfy the constraints within a specified tolerance, increase $B$ and repeat step 2. If the SDP solution returns floating point variables $\mathsf{Q}\in\mathbb{S}^m$, $\mathsf{P}_i\in\mathbb{S}^{m_i}$, $v\in\mathbb{R}^{n_0}$ and $r_j\in\mathbb{R}^{n_j}$ that nearly satisfy the constraints, including all matrices being positive definite, proceed with rational validation.
    \item Express the equality constraint in~\cref{eq:sos optimisation problem} as a linear relation $\mathsf{A} y = 0$, where the vector $y$ contains all variables in $\mathsf{Q}$, $\mathsf{P}_i$, $v$ and $r_j$. Choose $\mathsf A$ to have integer entries, which may require multiplying all terms by the denominator of $B$.
    \item Use rational Gaussian elimination to construct a matrix $\mathsf{K}$ whose columns are an exact basis for the kernel of $\mathsf A$.
    \item Using the floating point result from step 2, construct a rational vector $y_0$ that approximately solves $\mathsf{A}y=0$.
    \item Find a floating point vector $x_0$ minimizing $\left|\mathsf{K}x_0 - y_0\right|$ using the LSQR algorithm.
    \item Approximate $x_0$ as a rational vector $x$, so that $y=\mathsf K x$ exactly solves $\mathsf Ay=0$.
    \item Build rational $\mathsf{Q}$, $\mathsf{P}_i$, $v$ and $r_j$ from $y$ that satisfy the equality constraint in~\cref{eq:sos optimisation problem} by construction.
    \item Validate that the rational matrices satisfy $\mathsf{Q}\succ0$ and $\mathsf{P}_i\succeq0$. This is done via $\mathsf{LDL}^\mathsf{T}$ decomposition, which is exact with rational arithmetic.
    \item If validation fails at any step, increase $B$ and return to step 2. If validation succeeds, decrease $B$ and return to step 2. Carry out a line search for the smallest $B$ value that can be validated, terminating when reaching the desired precision in $B$.
\end{enumerate}
\end{algorithm}

\subsection{Systems with symmetries}
\label{sec: sym}

Many systems of practical interest have symmetries. An ODE system \cref{eq:system} is said to be symmetric under a linear transformation $\Lambda:\mathbb{R}^n\to\mathbb{R}^n$ if the domain is $\Lambda$-invariant, meaning $\Lambda(\Omega)=\Omega$, and the vector field is $\Lambda$-equivariant, meaning $f(\Lambda x) = \Lambda f(x)$ for all $x\in\Omega$. Such ODEs will have a trajectory $x(t)$ if and only if $\Lambda x(t)$ is also a trajectory. This can happen by the trajectory itself being $\Lambda$-invariant, or by there being a set of trajectories that each break the symmetry but map to each other under it.

Symmetry is useful to us in two ways. \Cref{sec: block diag} describes how symmetry allows matrices in the SDPs to be block diagonalized, which reduces cost and improves numerical accuracy. \Cref{sec: sym POs} describes how to compute bounds that apply only to POs that are $\Lambda$-invariant under a chosen $\Lambda$.

For simplicity, we restrict our attention to the common case of a sign symmetry, which negates a set of coordinates. In this case, $\Lambda$ is a \textit{signature matrix}, meaning it is a diagonal matrix with each diagonal entry being 1 or $-1$. To avoid the trivial case we assume that $\Lambda$ is not the identity matrix, so it changes at least one sign. Every signature matrix $\Lambda$ induces a class of \textit{even} functions $r:\Omega\to\mathbb{R}$ that satisfy $r(\Lambda x)=r(x)$ for all $x\in\Omega$, and a class of \textit{odd} functions that satisfy $r(\Lambda x)=-r(x)$. The theorems of this subsection could be extended to other finite groups of orthogonal symmetries following ideas in \citet{gatermann2004symmetry} and the appendix of \citet{oeri2023convex}.

\subsubsection{Block diagonalization in SDPs}
\label{sec: block diag}

We use a standard type of symmetrization argument to show that the matrices and polynomials appearing in the minimization in~\cref{thm:general} can be restricted to symmetric subspaces. \Cref{thm:sym reduce} justifies that the polynomial $V$ is invariant under the symmetry, and that the matrix $\mathsf Q$ can be block diagonalized into two blocks $\mathsf Q^e$ and $\mathsf Q^o$, which is denoted as $\mathsf Q = \mathsf Q^e \oplus \mathsf Q^o$. We then explain how all matrices in the resulting SDPs split into two blocks.

\begin{prop}
\label{thm:sym reduce}
Consider an ODE on $\Omega$ as in \cref{thm:general}, and suppose there is a signature matrix $\Lambda$ under which $\Omega$ is invariant and $f$ is equivariant. Let $w\in C^2(\Omega,\mathbb{R}^m)$ be of the form $w=(w^e,w^o)$, where all components of $w^e$ or $w^o$ are even or odd under $\Lambda$, respectively. Let the $n$ components of $x$ be among the $m\ge n$ components of $w$ also, and define $a=(a^e,a^o) =(\ld w^e,\ld w^o)$.
\begin{enumerate}[leftmargin=*]
\item The infimum $B^a$ defined by~\cref{eq:Ba def}, which bounds orbit periods by~\cref{eq:general result}, is unchanged if the minimization is additionally constrained such that $V$ is $\Lambda$-invariant and $\mathsf Q$ is block diagonalized such that $a^{\mathsf T}\mathsf{Q}a=(a^e)^{\mathsf T}\mathsf{Q}^ea^e+(a^o)^{\mathsf T}\mathsf{Q}^oa^o$.
\item Suppose further that $f\in \mathbb{R}^n[x]$, $w\in\mathbb{R}^m[x]$, and $\Omega$ has semialgebraic specification \cref{eq:semialg set} with all $g_i$ and $h_j$ $\Lambda$-invariant. Then, the infimum $B^a_\Sigma$ defined by~\cref{eq:Bsos def}, which bounds orbit periods by~\cref{eq:general result SOS}, is unchanged if the minimization is additionally constrained as in part 1 and by all $\sigma_i$ and $\rho_j$ being $\Lambda$-invariant in the quadratic module representation~\cref{eq:quad module}.
\end{enumerate}
\end{prop}
\begin{proof}
    For the first part, fix any $(w^e,w^o)$ satisfying the hypotheses, which fixes $(a^e,a^o)$. Let $(B,\mathsf{Q},V)$ satisfy the constraints of \cref{eq:Ba def}. It suffices to show that there exist $(\widehat{\mathsf Q},\widehat{V})$ satisfying the same constraints with the same $B$, where $\widehat{\mathsf Q}$ is block diagonal and $\widehat V$ is $\Lambda$-invariant. Such $\widehat{\mathsf Q}$ and $\widehat{V}$ can be constructed from $\mathsf Q$ and $V$ by averaging the inequality in~\cref{eq:Ba def} at $x$ and at~$\Lambda x$.

By assumption, the even and odd parts of $w$ satisfy $w^e(\Lambda x) = w^e(x)$ and $w^o(\Lambda x)=-w^o(x)$. The Lie derivative $a^e = \ld w^e$ is also even since
    \begin{equation}
    \begin{aligned}
        a^e(\Lambda x) &= \ld w^e (\Lambda x) 
        \\&= f(\Lambda x) \cdot D w^e (\Lambda x) 
        \\&= \left(\Lambda f(x)\right) \cdot \Lambda^\mathsf{-T} D (w^e\circ \Lambda) (x)
        \\&= f(x)\cdot D (w^e \circ \Lambda)(x)
        \\&= f(x) \cdot D w^e (x)
        \\&= a^e(x),
        \end{aligned}
    \end{equation}
where the gradient $D$ is always with respect to the entire argument. A similar calculation shows that $a^o=\ld w^o$ is odd. Denote the blocks of $\mathsf Q$ as
    \begin{equation}
        \mathsf{Q} = \begin{bmatrix}
            \mathsf{Q}^e &\mathsf{Q}^c\\(\mathsf{Q}^c)^\mathsf{T}&\mathsf{Q}^o
        \end{bmatrix},
    \end{equation}
where the square blocks $\mathsf{Q}^e$ and $\mathsf{Q}^o$ match the sizes of the vectors $a^e$ and $a^o$. Observe that
\begin{equation}
a(x)^\mathsf{T}\mathsf{Q} a(x) + a(\Lambda x)^\mathsf{T}\mathsf{Q} a(\Lambda x) = 2a^e(x)^\mathsf{T}\mathsf{Q}^e a^e(x) + 2 a^o(x)^\mathsf{T}\mathsf{Q}^o a^o(x),
\end{equation}
and likewise for $(\ld a)^\mathsf{T}\ld\mathsf{Q} a$. Since the inequality between $(B,\mathsf{Q},V)$ in~\cref{eq:Ba def} holds at all $x\in\Omega$, it also holds when evaluated at $\Lambda x$ for all $x\in\Omega$. Adding these two inequalities and dividing by 2 gives
\begin{equation}
\label{eq:Ba cons sym}
B \left[(a^e)^\mathsf{T}\mathsf{Q}^e a^e+(a^o)^\mathsf{T}\mathsf{Q}^o a^o\right] - \left[\left(\ld a^e\right)^\mathsf{T} \mathsf{Q}^e\left(\ld a^e\right)+\left(\ld a^o\right)^\mathsf{T} \mathsf{Q}^o\left(\ld a^o\right)\right]  + \ld \widehat V \ge 0
    \end{equation}
where $\widehat V(x) = \frac{1}{2}[V(x) + V(\Lambda x)]$ is $\Lambda$-invariant. Note that~\cref{eq:Ba cons sym} is exactly the constraint in~\cref{eq:Ba def}, for $\widehat{\mathsf Q} = \mathsf Q^e \oplus \mathsf Q^o$, with $\mathsf Q^e\succ0$ and $\mathsf Q^o\succ0$, and $\Lambda$-invariant $\widehat V$, so the first part of the proposition is proved.

For the second part, fix any $(w^e,w^o)$ satisfying the hypotheses, and let $(B,\mathsf{Q},V)$ satisfy the constraints of~\cref{eq:Bsos def}. The constraint requiring membership in the quadratic module $\Sigma^\Omega$ means there exist $\sigma_i\in\Sigma$ and $\rho_j\in\mathbb R[x]$ such that
\begin{equation}
\label{eq:Bsos cons sym}
B a^\mathsf{T}\mathsf{Q} a - \left(\ld a\right)^\mathsf{T} \mathsf{Q} \left(\ld a\right)  + \ld V = \sigma_0+\sum\limits_{i=1}^I \sigma_ig_i+\sum\limits_{j=1}^{J}\rho_jh_j
\end{equation}
for all $x\in\mathbb R^n$. It suffices to show that there exist $(\widehat{\mathsf{Q}},\widehat{V},\widehat{\sigma}_i,\widehat{\rho}_j)$ satisfying the constraints of~\cref{eq:Bsos def} with the same $B$, where $\widehat{\mathsf{Q}}$ is block diagonal and the polynomials are all $\Lambda$-invariant. This follows from a symmetrization argument as in the first part. The equality constraint~\cref{eq:Bsos cons sym} holds when all functions are evaluated at $x$ or at $\Lambda x$. Averaging these two versions of the equality give the analogous equality relating $(B,\widehat{\mathsf{Q}},\widehat{V},\widehat{\sigma}_i,\widehat{\rho}_j)$, where $\widehat{\mathsf{Q}}$ and $\widehat{V}$ are as in the first part, and $\widehat\sigma_i$ and $\widehat\rho_j$ are symmetrizations defined analogously to $\widehat V$. The symmetrized $\widehat{\sigma}_i$ are SOS since the $\sigma_i$ are, which completes the proof of the second part.
\end{proof}

Recall that $B^a_\Sigma$ defined by~\cref{eq:general result SOS} can be bounded above in practice by choosing a finite-dimensional truncation of the quadratic module $\Sigma^\Omega$ and converting the problem to an SDP. As described below in \cref{thm:general}, the truncated quadratic module constraint is enforced by choosing bases $b_i$, $c_j$, and $c$ and requiring there to exist vectors $v$, $r_j$ and matrices $\mathsf P_i\succeq0$ such that the equality \cref{eq:sdp equality} holds. In the present case, where the ODE is symmetric under $\Lambda$ and we choose $w=(w^e,w^o)$, the second part of \cref{thm:sym reduce} lets terms in the equality~\cref{eq:sdp equality} be further constrained without affecting the feasibility of the resulting SDP. In particular, the matrix $\mathsf Q$ in the first the two terms of~\cref{eq:sdp equality} can be block diagonal with blocks $\mathsf Q^e$ and $\mathsf Q^o$, and the other terms in~\cref{eq:sdp equality} represent $\Lambda$-invariant polynomials. This $\Lambda$-invariance lets us choose the bases $c$ and $c_j$ to be $\Lambda$-invariant (i.e., even). For the SOS polynomials $b_i^\mathsf{T} \mathsf{P}_i b_i$, the bases can be chosen with even and odd parts as in $b_i=(b_i^e,b_i^o)$, and the matrices can be correspondingly block diagonalized such that $b_i^\mathsf{T} \mathsf{P}_i b_i=(b_i^e)^\mathsf{T} \mathsf{P}_i^e b_i^e+(b_i^o)^\mathsf{T} \mathsf{P}_i^o b_i^o$. This block diagonalization sacrifices nothing; the polynomials are SOS if and only if they can be represented by blocks $\mathsf P_i^e,\mathsf P_i^o\succeq0$ \citep{gatermann2004symmetry}. In summary, sign symmetry lets us decompose the equality constraint~\cref{eq:sdp equality} as
\begin{multline}
\label{eq:sdp equality exploiting sym}
B \left[(a^e)^\mathsf{T}\mathsf{Q}^e a^e+(a^o)^\mathsf{T}\mathsf{Q}^o a^o\right] - \left[\left(\ld a^e\right)^\mathsf{T} \mathsf{Q}^e\left(\ld a^e\right)+\left(\ld a^o\right)^\mathsf{T} \mathsf{Q}^o\left(\ld a^o\right)\right]  + v^\mathsf{T}\ld c \\= (b^e_0)^\mathsf{T} \mathsf{P}^e_0 b^e_0 + (b^o_0)^\mathsf{T} \mathsf{P}^o_0 b^o_0 + \sum_{i=1}^I \left[(b^e_i)^\mathsf{T} \mathsf{P}^e_i b^e_i + (b^o_i)^\mathsf{T} \mathsf{P}^o_i b^o_i\right] g_i + \sum_{j=1}^J r_j^\mathsf{T}c_j h_j.
\end{multline}
The minimization~\cref{eq:sos optimisation problem} subject to this constraint then becomes
\begin{equation}
\label{eq:sos optimisation problem exploiting sym}
B_\Sigma^a\le 
\inf_{\substack{\mathsf{Q}^e\oplus\mathsf{Q}^o\in\mathbb S^m,~\\
~\mathsf{P}_i^e\oplus\mathsf{P}_i^o\in\mathbb S^{m_i},~\\
v\in\mathbb{R}^{n_0}\\
r_j\in\mathbb{R}^{n_j}~}} B\quad s.t.\quad
\begin{array}[t]{l}
(\mathsf Q^e,\mathsf Q^o,\mathsf{P}^e_i,\mathsf{P}^o_i,v,r_j)\text{ satisfy the linear equality implied by }\cref{eq:sdp equality exploiting sym}\\
\mathsf{Q}^e\succ 0,\,\mathsf{Q}^o\succ 0,\text{ and all }\mathsf{P}^e_i\succeq0 \text{ and } \mathsf{P}^o_i\succeq0.
\end{array}
\end{equation}

For any choice of finite even bases $a^e,b^e_i,c,c_j$ and odd bases $a^o,b^o_i$, the right-hand infimum in~\cref{eq:sos optimisation problem exploiting sym} coincides with the right-hand infimum in~\cref{eq:sos optimisation problem} if $a$ and $b_i$ have the same spans as $(a^e,a^o)$ and $(b_i^e,b_i^o)$, respectively. With $B$ fixed, and positive definiteness relaxed to semidefiniteness, the resulting SDP feasibility problems from~\cref{eq:sos optimisation problem,eq:sos optimisation problem exploiting sym} are either both feasible or both infeasible. However, the symmetry-reduced version has significantly lower computational cost because each matrix variable is roughly half as large. It is also straightforward to adapt \cref{alg:general} for validated numerics so that the symmetry-reduced SDP from~\cref{eq:sos optimisation problem exploiting sym} is used rather than the SDP from~\cref{eq:sos optimisation problem}. This modification of \cref{alg:general} is used in our provided code, which produces the validated numerics for the examples of \cref{sec:HH,sec:lorenz}.

\subsubsection{Bounds for symmetric orbits}
\label{sec: sym POs}

We now turn to bounds that apply to all $\Lambda$-invariant POs but not necessarily to other POs. If a PO is $\Lambda$-invariant, it may or may not lie in the subset of $\Omega$ that is fixed by $\Lambda$, which is
\begin{equation}
\Omega_\Lambda=\{x\in\Omega\,|\,\Lambda x = x\}.
\end{equation}
For POs that are contained in $\Omega_\Lambda$, one should be able to formulate a lower-dimensional ODE for the dynamics on $\Omega_\Lambda$, and then use our general approach to bound their periods. We thus focus on the other case of POs that are $\Lambda$-invariant but not contained in $\Omega_\Lambda$. 
We let $\mathcal{P}_\Lambda$ contain all periods of such orbits:
\begin{equation}
\label{eq: P set Lambda}
\mathcal{P}_\Lambda=\{ T\in\mathcal{P}~|~\text{there is a $T$-periodic orbit that is $\Lambda$-invariant but not in $\Omega_\Lambda$}\}.
\end{equation}
With minor modification, the optimization problems in \cref{thm:general} can give bounds applying to orbit periods in $\mathcal P_\Lambda$, rather than in the set $\mathcal P$ of all periods. The only change needed is for the basis $a(x) = a^o(x)$ to be purely odd under $\Lambda$, rather than being the Lie derivative of another basis $w$. Then, $\Phi$ will have zero mean on all $\Lambda$-invariant POs, but not generally on other POs. This observation gives the following proposition, in which we consider only sign symmetries for simplicity of exposition.

\begin{prop}
\label{thm:symmetric POs}
Consider an ODE on $\Omega$ as in \cref{thm:general}, and suppose there is a signature matrix $\Lambda$ under which $\Omega$ is invariant and $f$ is equivariant. Let $a^o\in C^1(\Omega,\mathbb{R}^m)$ be odd under $\Lambda$ with $a^o(x)=0$ only if $x\in\Omega_\Lambda$.
\begin{enumerate}[leftmargin=*]
\item The infimum $B^a$ defined by~\cref{eq:Ba def} with $a=a^o$ bounds symmetric orbit periods below by
\begin{equation}
        \inf_{\mathcal{P}_\Lambda} T \ge \frac{2\pi}{\sqrt{B^a}},
        \label{eq:symmetric result}
    \end{equation}
and minimization in~\cref{eq:Ba def} has the same infimum if $V$ is constrained to be $\Lambda$-invariant.
\item Suppose further that $f\in \mathbb{R}^n[x]$, $a^o\in\mathbb{R}^m[x]$, and $\Omega$ has semialgebraic specification \cref{eq:semialg set} with all $g_i$ and $h_j$ $\Lambda$-invariant. The infimum $B^a_\Sigma$ defined by~\cref{eq:Bsos def} with $a=a^o$ bounds symmetric orbit periods below by
\begin{equation}
        \inf_{\mathcal{P}_\Lambda} T \ge \frac{2\pi}{\sqrt{B^a_\Sigma}},
        \label{eq:symmetric sos result}
    \end{equation}
\end{enumerate}
and the minimization in~\cref{eq:Bsos def} has the same infimum if $V$ is $\Lambda$-invariant and so are all $\sigma_i$ and $\rho_j$ in the quadratic module representation~\cref{eq:quad module}.
\end{prop}
\begin{proof}
The first part is proved similarly to the first part of \cref{thm:general}. Let the trajectory $x(t)$ be a $\Lambda$-invariant PO of fundamental period $T$ that is not contained in $\Omega_\Lambda$, and let $(B,\mathsf{Q},V)$ satisfy the constraint of \cref{eq:Ba def}. It suffices to show that $T\ge 2\pi/\sqrt{B}$.

Since the PO is $\Lambda$-invariant but not in $\Omega_\Lambda$, for all $t$ it holds that $x(t+T/2)=\Lambda x(t)$. Therefore $a^o(x)$ and $\Phi(x)$ are mean-zero over the orbit since
\begin{equation}
\int_0^T a^o(x(t))\,\mathrm{d}t = \int_0^T a^o(\Lambda x(t-T/2))\,\mathrm{d}t = \int_0^T -a^o(x(t-T/2))\,\mathrm{d}t = -\int_0^T a^o(x(t))\,\mathrm{d}t = 0,
\end{equation}
and
\begin{equation}
    \int_0^T\Phi(x(t))\,\mathrm{d}t= \mathsf{U} \int_0^T a^o(x(t))\,\mathrm{d}t = 0,
\end{equation}
recalling that $\Phi(x) = \mathsf{U}a^o(x)$ where $\mathsf{Q}=\mathsf{U}^\mathsf{T}\mathsf{U}$. Meanwhile, $\Phi(x)$ does not vanish everywhere on $x(t)$; it can vanish only where $a^o(x)$ does since $\mathsf Q\succ0$, and $a^o(x)=0$ only in $\Omega_\Lambda$, which does not contain $x(t)$ by assumption. Since $\Phi$ is mean-zero on $x(t)$ but does not vanish pointwise, \cref{thm:main} gives $T\geq 2\pi/\sqrt{B}$ as desired. Lastly, the fact that the minimization defining $B^a$ has the same infimum over only $\Lambda$-invariant $V$ follows from the same symmetrization argument used to prove~\cref{thm:sym reduce}.

For the second part of the proposition, the lower bound~\cref{eq:symmetric sos result} follows from~\cref{eq:symmetric result} because the minimization defining $B^a_\Sigma$ is an SOS relaxation of the one defining $B^a$ in the polynomial case, and so $B^a\le B^a_\Sigma$. The justification of minimizing only over $\Lambda$-invariant polynomials is again as in~\cref{thm:sym reduce}.
\end{proof}

\begin{rmk}
\label{rmk: sym SDP}
Lower bounds on periods of symmetric POs can be computed via~\cref{eq:symmetric sos result} by formulating SDPs that bound $B^a_\Sigma$ above. The $\Lambda$-invariance of polynomials justified by \cref{thm:symmetric POs} allows matrices in the SDPs to be block diagonalized. The resulting SDPs take the same form as in~\cref{eq:sos optimisation problem exploiting sym}, with $\mathsf Q^o\succeq0$ relaxed as usual but with no $\mathsf Q^e$ block at all in this case. With straightforward modification of \cref{alg:general} one can verify a value of $B \ge B^a_\Sigma$.
\end{rmk}

\section{Examples}
\label{sec:examples}

We now demonstrate the methods of \cref{sec:thm} on two classic ODE examples -- the H\'enon--Heiles system, which displays Hamiltonian chaos, and the Lorenz system, which displays dissipative chaos. Both systems have sign symmetries that can be exploited as in \cref{sec: sym}, and we use these symmetries to construct validated numerical bounds that are sharp to five digits, for both general and symmetric POs in each system. The validated computations are variants of \cref{alg:general}, with certain choices of the polynomial bases in each case, which we detail below.

Julia code to reproduce results in this section is available at \url{https://github.com/jeremypparker/ODE_Period_Bounds}. We use the JuMP \citep{JuMP} and DynamicPolynomials.jl \citep{dynamicpolynomials} packages to assemble the SDPs, which are then solved using the Hypatia solver \citep{hypatia} with quadruple precision (128-bit) floating point arithmetic from DoubleFloats.jl \citep{Sarnoff_DoubleFloats_2022}. Rational validation is performed using the QQ type from Nemo.jl \citep{Nemo.jl-2017}. We have not optimized our implementation of \cref{alg:general} but have made some effort to be efficient. Runtimes are given for all computations reported below, measured by repeating each computation several times on a laptop using a single core of an Intel Core i7-13800H. The system had 32GB of memory, but most computations used far less.

\subsection{Minimal periods in the H\'enon--Heiles system}
\label{sec:HH}

The H\'enon--Heiles system  \citep{henon1964applicability} is defined by the Hamiltonian
\begin{equation}
\label{eq:HH}
    H(x) = \frac{1}{2}\left(x_1^2+x_2^2+x_3^2+x_4^2\right) + x_1^2 x_2 - \frac{1}{3}x_2^3,
\end{equation}
which induces an ODE system $\ddt{}x(t)=f(x(t))$ with the polynomial right-hand side
\begin{equation}
\label{eq:hh f}
f(x) = \left(x_3,\,x_4,\,-x_1-2x_1x_2,\,-x_2-x_1^2+x_2^2\right).
\end{equation}
This system has a $2\pi/3$ rotational symmetry that we do not exploit, as well as a sign-change symmetry that we do exploit, where $x$ is transformed by the linear mapping
\begin{equation}
    \Lambda = \begin{bmatrix}
        -1&0&0&0\\
        0&1&0&0\\
        0&0&-1&0\\
        0&0&0&1
    \end{bmatrix}.
    \label{eq:HH D}
\end{equation}

There are fixed points at $x= 0$ and at the three points $x=(0,1,0,0)$ and $x=(\pm\sqrt{3}/2,-1/2,0,0)$ that are related by the rotational symmetry. The nonzero fixed points have energy $H(x)=1/6$, which is a bifurcation value: For $0\le H(x)\le 1/6$, there is a compact region of bounded (generally chaotic) trajectories around the origin, whereas for $H(x)>1/6$ generic trajectories are not bounded. No bounded trajectories exist when $H<0$.

We restrict attention to the energies $H\le1/6$, for which there are an infinitude of POs. In order to find a candidate for the PO with the shortest period in this energy range, we compute the periods of eight orbits described by \citet{churchill1979survey} that bifurcate from the origin at $H=0$. These POs are continued as $H$ is increased, using Newton-Raphson iteration combined with a shooting method. Among the POs we have computed, the shortest period is attained by two orbits that map to each other under the sign symmetry $\Lambda$. \Cref{fig:hh} shows these two orbits, which have been called $\Pi_7$ and $\Pi_8$ in the past~\cite{churchill1979survey}.
The periods of these orbits decrease monotonically as $H$ is raised; the period approaches $2\pi$ as $H\to0$, while it decreases until $T\approx 6.02248$ at $H=1/6$. In \cref{sec:HHgeneral} we prove that this period is indeed minimal, at least to 5 digits, by verified computation of lower bounds for all POs with $H\le1/6$. In \cref{sec:HHsym} we compute still larger lower bounds for POs that are invariant under $\Lambda$, which excludes the $\Pi_7$ and $\Pi_8$ orbits.

\begin{figure}
    \centering
    \begin{tikzpicture}
\begin{groupplot}[
    group style={group size=2 by 1, horizontal sep=2.0cm, vertical sep=1.8cm},
    width=7.2cm,
    height=5.6cm,
    axis equal image,
    grid=both,
    grid style={line width=.1pt, draw=gray!25},
    major grid style={line width=.2pt, draw=gray!45},
    tick label style={font=\small},
    label style={font=\small},
    title style={font=\small},
    xmin=-0.5, xmax=0.5,
    ymin=-0.5, ymax=0.5,
    xtick={-0.4,0,0.4},
    ytick={-0.4,0,0.4},
    minor tick num=0,
    ylabel style={
        rotate=-90,
        xshift=0.4cm
    },
]

\nextgroupplot[
    xlabel={$x_1$},
    ylabel={$x_2$},
]
\addplot+[no marks, thick] table[x index=1, y index=2] {pi7_E_1over6.dat};
\addlegendentry{$\Pi_7$}
\addplot+[no marks, thick, dashed] table[x index=1, y index=2] {pi8_E_1over6.dat};
\addlegendentry{$\Pi_8$}

\legend{}

\nextgroupplot[
    xlabel={$x_3$},
    ylabel={$x_4$},
]
\addplot+[no marks, thick] table[x index=3, y index=4] {pi7_E_1over6.dat};
\addlegendentry{$\Pi_7$}
\addplot+[no marks, thick, dashed] table[x index=3, y index=4] {pi8_E_1over6.dat};
\addlegendentry{$\Pi_8$}

\legend{}

\end{groupplot}
\end{tikzpicture}
\caption{Two POs of the H\'enon--Heiles system \cref{eq:HH}, projected onto the position coordinates (left) and momentum coordinates (right), that have the shortest known period of $T\approx6.02248$ among energies $H\le1/6$. The POs, which have been called $\Pi_7$ ($\color{blue}\solidrule$) and $\Pi_8$ ($\color{red}\dashedrule$),  map to each other by the symmetry~\cref{eq:HH D}.}
\label{fig:hh}
\end{figure}

\subsubsection{Bounds on periods of general POs}
\label{sec:HHgeneral}

To find lower bounds applying to all POs of the H\'enon--Heiles system, we use a variant of \cref{alg:general} applied to \cref{eq:sos optimisation problem exploiting sym}.
In the constraint \cref{eq:sdp equality exploiting sym}, the bases for $\Phi$ are $a^e=\ld w^e$ and $a^o=\ld w^o$, with $w^e$ and $w^o$ being the even and odd monomials in $x$ of degrees $1$ through $d_a-1$.
The domain is the semialgebraic set
\begin{equation}
\label{eq:hh domain}
    \Omega = \{x\in\mathbb{R}^3~|~1-6H(x)\geq0\},
\end{equation}
so that $I=1$, $J=0$ and $g_1=1-6H$ in \cref{eq:quad module}.
In contrast to \citet{oeri2023convex}, we do not restrict the domain to a compact set, nor do we constrain the energy to be nonnegative, so \cref{eq:hh domain} includes unbounded trajectories, but this does not pose a problem for our method.

\Cref{tab:HH bases} summarizes how we have chosen the various polynomial basis vectors appearing in the relation~\cref{eq:sdp equality exploiting sym}, which implies the equality constraint for the SDP~\cref{eq:sdp equality exploiting sym}. Most basis vectors simply contain monomials up to the required degree that are even or odd under the symmetry $\Lambda$. However, the $b_i^e$ and $b_i^o$ vectors must be chosen with care. For robustness of \cref{alg:general}, we want the matrices $\mathsf{P}_i^e$ and $\mathsf{P}_i^o$ to be strictly positive definite, so that the validation step does not start with a numerical matrix whose eigenvalues are close to machine precision. The matrices can be positive definite only if $b_i^e$ and $b_i^o$ vanish at certain fixed points, for the following reasons.

\begin{table}[tp]
    \centering
    \renewcommand{\arraystretch}{1.3}
\begin{tabular}{
  >{\centering\arraybackslash}m{1.2cm}
  >{\centering\arraybackslash}m{2cm}
  >{\centering\arraybackslash}m{3cm}
  >{\centering\arraybackslash}m{6.5cm}
}
  \hline Basis & Maximum degree & Purpose & Terms \\\hline
         $a^e$ & $d_a$ & Defining $\Phi(x)$ & $\ld w^e$, where $w^e$ contains monomials of degrees 1 to $d_a-1$\\
         $a^o$ & $d_a$ & Defining $\Phi(x)$ & $\ld w^o$, where $w^o$ contains monomials of degrees 1 to $d_a-1$\\
         $b_0^e$ & $d_b$ & Enforcing nonnegativity & See text \\
         $b_0^o$ & $d_b$ & Enforcing nonnegativity & See text \\
         $b_1^e$ & $d_b-2$ & Enforcing $H\le 1/6$ & Even monomials of degrees 1 to $d_b-2$ \\
         $b_1^o$ & $d_b-2$ & Enforcing $H\le 1/6$ & Odd monomials of degrees 1 to $d_b-2$ \\
         $c$ & $d_c=2d_b-1$ & Defining $V(x)$ & Even monomials of degrees 1 to $d_c$\\\hline
    \end{tabular}
    \caption{Polynomial basis vectors used in the constraint \cref{eq:sdp equality exploiting sym} to bound periods of general POs in the H\'enon--Heiles system \cref{eq:hh f}. These vectors are starting points for an iterative procedure (see text) that prunes terms with near-zero coefficients and re-solves the SDP~\cref{eq:sdp equality exploiting sym}.}
    \label{tab:HH bases}
\end{table}

At the nonzero fixed points, where $f$ and $g_1$ vanish, the constraint~\cref{eq:sdp equality exploiting sym} reduces to $(b^e_0)^\mathsf{T} \mathsf{P}^e_0 b^e_0 + (b^o_0)^\mathsf{T} \mathsf{P}^o_0 b^o_0=0$. These matrices can be positive definite only if $b_0^e$ and $b_0^o$ vanish at the nonzero fixed points. We thus generate $b_0^e$ using the components of the ODE right-hand side~\cref{eq:hh f}, multiplying $f_1$ and $f_3$ by odd monomials, and multiplying $f_2$ and $f_4$ by even monomials, to produce a set of even terms up to degree $d_b$. This set is then reduced to a linearly independent basis using QR factorization and truncation (see the code), and this even basis forms the $b_0^e$ vector. The odd basis vector $b_0^o$ is generated analogously. At the fixed point on the origin, where $f$ and the $b_0$ vectors vanish, but $g_1$ does not, the constraint~\cref{eq:sdp equality exploiting sym} reduces to $(b^e_1)^\mathsf{T} \mathsf{P}^e_1 b^e_1 + (b^o_1)^\mathsf{T} \mathsf{P}^o_1 b^o_1=0$. These matrices can be positive definite only if their basis vectors vanish at the origin, which we ensure by using all monomials up to degree $d_b$ but excluding the constant term from $b_1^e$.

We augment \cref{alg:general} with one important additional step, as well as minor modifications to allow block diagonalization with respect to the symmetry. The additional step occurs after the SDP is approximately solved using floating point arithmetic. For all the basis vectors that are multiplied on both sides of a matrix, we identify components whose corresponding diagonal matrix entries are less than $10^{-4}$. All of these components are removed from the basis vectors, and then the new, smaller SDP is solved. This is done four times, yielding approximate SDP solutions whose matrices are strictly positive definite with larger minimum eigenvalues. This aids the much slower validation steps that follow. To make sure that \cref{thm:sym reduce} still applies after such pruning, we check that the entries of $a^e$ and $a^o$ still include the even and odd elements of $f$, respectively.

\Cref{tab:HH} reports rigorously validated lower bounds on orbit periods that we computed using several different choices for maximum polynomial degrees. Our best bound is sharp to five digits; it agrees to this precision with the period of the symmetry-related orbits $\Pi_7$ and $\Pi_8$ at $H=1/6$. This bound is computed and validated in about 9 seconds on a single core. Higher polynomial degrees give better bounds, and they generally incur higher computational cost (cf.\ \cref{tab:HH}). For each combination of degrees $(d_a,d_b)$, the degree $d_c$ is chosen so that the polynomial degree of $\ld V$ is equal to the maximum degree of the other terms in \cref{eq:sdp equality exploiting sym}, which means that increasing $d_c$ further cannot improve the bound. One can also choose to have the same maximum degrees in terms containing $\mathsf Q$ and $\mathsf P$, which occurs when $d_a=d_b-1$ in the present example. This is the choice made in the first and last rows of~\cref{tab:HH}. The middle row has $d_a<d_b-1$, which seems to give a worse tradeoff between sharpness and computation time.

\begin{table}[tp]
    \centering
\begin{tabular}{
  >{\centering\arraybackslash}m{.7cm}
  >{\centering\arraybackslash}m{.7cm}
  >{\centering\arraybackslash}m{.7cm}
  >{\centering\arraybackslash}m{1cm}
  >{\centering\arraybackslash}m{2.8cm}
  >{\centering\arraybackslash}m{1.4cm}
  >{\centering\arraybackslash}m{2cm}
}        \hline$d_a$&$d_b$&$d_c$&$B$&\textbf{Lower bound on period}&SDP time (s)&Validation time (s)\\\hline
        2&3&5&1.28572&\textbf{5.5412}&0.33&0.01\\
        2&4&7&1.25210&\textbf{5.6151}&7.95&0.38\\
        3&4&7&1.08846&\textbf{{6.0224}}&8.59&0.49\\\hline
    \end{tabular}
    \caption{Validated lower bounds on orbit periods for the H\'enon--Heiles system, for different maximum degrees $(d_a,d_b,d_c)$ of the polynomial basis vectors described in \cref{tab:HH bases}. 
    The lower bounds are $2\pi/\sqrt{B}$, where $B$ is the smallest feasible value we could validate (to the precision shown) for the SDP~\cref{eq:sos optimisation problem exploiting sym}. The bound in the last row is sharp to all 5 digits, being saturated by the period $T\approx6.02248$ of the $\Pi_7$ and $\Pi_8$ orbits at $H=1/6$. Runtimes on a single core are shown for SDP solutions in quadruple precision arithmetic (including basis pruning and re-solving), and for rational validation.}
    \label{tab:HH}
\end{table}

\subsubsection{Symmetric POs}
\label{sec:HHsym}

We now turn to bounding periods in the set $\mathcal{P}_\Lambda$, which contains periods of all POs that are invariant under \cref{eq:HH D} but do not lie in the invariant subspace of $\Lambda$, which is where $x_1=x_3=0$. These POs exclude $\Pi_7$ and $\Pi_8$, which are the shortest orbits found in the general case but are not invariant under $\Lambda$. Among the eight families of simple orbits discussed by \citet{churchill1979survey}, only $\Pi_1$ and $\Pi_4$ are invariant under $\Lambda$, and $\Pi_4$ is the only one of these that does not lie in the invariant subspace. Thus the set $\mathcal{P}_\Lambda$  contains all periods that $\Pi_4$ assumes for energies $0<H\le 1/6$. The period of $\Pi_4$ increases as $H$ increases, in contrast to $\Pi_7$ and $\Pi_8$, but it too has $T\to 2\pi$ as $H\to 0$. Therefore, the infimum of $\mathcal{P}_\Lambda$ can be no larger than $2\pi$. We show below that it is exactly $2\pi$.

Our approach broadly follows that of \cref{sec:HHgeneral}, but now we apply \cref{thm:symmetric POs} instead of \cref{thm:sym reduce} and use a modification of \cref{alg:general} to find a bound via \cref{rmk: sym SDP}. Rather than explicitly constructing a basis $a$ matching the requirements of \cref{thm:symmetric POs}, we will start with a basis that vanishes on the symmetric subspace $\Omega_\Lambda$ but might also vanish elsewhere. Following an approximate SDP solution, this basis is then pruned via the same iterative process as in \cref{sec:HHgeneral}. After successful validation, we verify that the final basis $a^o$ satisfies the requirement that $a^o(x)=0$ for $x\in\Omega$ if and only if $x\in\Omega_\Lambda$.

The other bases $(b_0^e,b_0^o,b_1^e,b_1^o)$ are much simpler than in \cref{sec:HHgeneral}, since we no longer have the complication that $a$ automatically vanishes at the nonzero fixed points. We therefore let $b_0^e$ and $b_0^o$ contain nonconstant monomials up to degree $d_b$, let $b_1^e$ and $b_1^o$ contain nonconstant monomials up to degree $d_b-2$, and let $c$ contain nonconstant monomials up to degree $d_c$. As in \cref{sec:HHgeneral}, these bases are starting points for iterative pruning that removes zero eigenvalues from the matrices. These bases are summarized in \cref{tab:HH bases symm}.

\begin{table}[tp]
    \centering
    \renewcommand{\arraystretch}{1.3}
\begin{tabular}{
  >{\centering\arraybackslash}m{1.2cm}
  >{\centering\arraybackslash}m{2cm}
  >{\centering\arraybackslash}m{3cm}
  >{\centering\arraybackslash}m{6.5cm}
}
  \hline Basis & Maximum degree & Purpose & Terms \\\hline
         $a^o$ & $d_a$ & Defining $\Phi(x)$ & Odd monomials of degrees 1 to $d_a$\\
         $b_0^e$ & $d_b$ & Enforcing nonnegativity & Even monomials of degrees 1 to $d_b$ \\
         $b_0^o$ & $d_b$ & Enforcing nonnegativity & Odd monomials of degrees 1 to $d_b$ \\
         $b_1^e$ & $d_b-2$ & Enforcing $H\le 1/6$ & Even monomials of degrees 1 to $d_b-2$ \\
         $b_1^o$ & $d_b-2$ & Enforcing $H\le 1/6$ & Odd monomials of degrees 1 to $d_b-2$ \\
         $c$ & $d_c=2d_b-1$ & Defining $V(x)$ & Even monomials of degrees 1 to $d_c$\\\hline
    \end{tabular}
    \caption{Polynomial basis vectors used in the constraint \cref{eq:sdp equality exploiting sym} to bound periods of symmetric POs in the H\'enon--Heiles system \cref{eq:hh f}. These vectors are starting points for an iterative procedure (see text) that prunes terms with near-zero coefficients and re-solves the SDP~\cref{eq:sdp equality exploiting sym}.}
    \label{tab:HH bases symm}
\end{table}

\begin{table}[tp]
    \centering
\begin{tabular}{
  >{\centering\arraybackslash}m{.7cm}
  >{\centering\arraybackslash}m{.7cm}
  >{\centering\arraybackslash}m{.7cm}
  >{\centering\arraybackslash}m{1cm}
  >{\centering\arraybackslash}m{2.8cm}
  >{\centering\arraybackslash}m{1.4cm}
  >{\centering\arraybackslash}m{2cm}
}        \hline$d_a$&$d_b$&$d_c$&$B$&\textbf{Lower bound on period}&SDP time (s)&Validation time (s)\\\hline
        1&3&5&3.00& \textbf{3.6275}& 0.55 &0.01\\
        2&3&5&3.00& \textbf{3.6275}& 0.60 &0.01\\
        1&4&7&3.00& \textbf{3.6275}& 8.61 &0.25\\
        2&4&7&1.07& \textbf{6.0741}& 11.5 &0.26\\
        3&4&7&1.00& \textbf{$\mathbf{2}\boldsymbol{\pi}$}&9.67 &0.29 \\\hline
    \end{tabular}
    \caption{Validated lower bounds on periods of symmetric POs of the H\'enon--Heiles system, for different maximum degrees $(d_a,d_b,d_c)$ of the polynomial basis vectors described in \cref{tab:HH bases symm}. Bounds apply to POs that are invariant under the sign change~\cref{eq:HH D} as curves but not at every point. The bound of $2\pi$ is sharp, being approached by the $\Pi_4$ orbit as $H\to 0$. The lower bounds are $2\pi/\sqrt{B}$, where $B$ is the smallest value we could validate (to the precision shown) for the SDP~\cref{eq:sos optimisation problem exploiting sym}. Runtimes on a single core are shown for SDP solutions in quadruple precision arithmetic (including basis pruning and re-solving), and for rational validation.}
    \label{tab:HH symm}
\end{table}

\Cref{tab:HH symm} reports rigorously validated lower bounds on periods of POs that are symmetric -- but are not confined to the invariant subspace $\Omega_\Lambda$ -- which we have computed using several different choices for maximum polynomial degrees. Our best lower bound of $T\ge 2\pi$, which is obtained by validating feasibility of the SDP when $B=1$, is an exactly sharp bound on the infimum of $\mathcal{P}_\Lambda$ since the $\Pi_4$ orbit approaches this period as $H\to0$. This bound is computed and validated in about 10 seconds on a single core. For each $(d_a,d_b)$, we choose $d_c$ so that the polynomial degree of $\ld V$ is equal to the maximum degree of the other terms in \cref{eq:sdp equality exploiting sym}.

As discussed above, one final step is needed to confirm that this bound applies to all symmetric POs not in $\Omega_\Lambda$. We must check that the pruned basis $a^o$, which vanishes on $\Omega_\Lambda$ by construction, does not vanish elsewhere in $\Omega$. We have checked this property for all cases in \cref{tab:HH symm}, but for brevity we demonstrate it only for the $d_a=4$ case that gives the sharp bound. After the iterative reduction process, the algorithm successfully validated the SDP using the pruned basis
\begin{multline}
a^o(x) = \big(
    x_3x_4,\,
    -x_3+x_2x_3,\,
    x_1x_4,\,
    -x_1+x_1x_2,\,
    x_2x_3x_4,\,
    -x_3+x_2^2x_3,\\
    x_1x_4^2,\,
    x_1x_3^2,\,
    x_1x_2x_4,\,
    -x_1+x_1x_2^2,\,
    x_1^2x_3,\,
    x_1^3,\,
    -x_1+x_1x_2^3,\,
    x_1^3x_2\big)
\end{multline}
It is clear that $a^o$ vanishes on the invariant subspace $\Omega_\Lambda$ where $x_1=x_3=0$. For the converse, we suppose that $a^o(x)=0$ and show $x_1=x_3=0$. Since $x_1^3$ is an entry of $a^o$, we have $x_1=0$. Since $x_3x_4$ is an entry, $x_3=0$ or $x_4=0$. In the second case, where $x_1=x_4=0$, we have $x_3=0$ or $x_2=1$ since $-x_3+x_2x_3$ and $-x_3+x_2^2x_3$ are entries of $a^o$. However, $x_3$ must vanish even if $x_2=1$ since, when $x_1=x_4=0$ and $x_2=1$, the energy constraint that defines $\Omega$ becomes 
\begin{equation}
H(x) = \frac{1}{6} + \frac{1}{2}x_3^2 \le \frac{1}{6}.
\end{equation}
Therefore, $a^o$ vanishing implies $x_1=x_3=0$. Similar arguments can be made for all $a^o$ bases used to produce the bounds reported in \cref{tab:HH symm}, so these bounds indeed apply to all symmetric POs not confined to $\Omega_\Lambda$.

\subsection{Minimal periods in the Lorenz system}
\label{sec:lorenz}

The \citet{lorenz1963deterministic} system is governed by an ODE system $\ddt{}x(t)=f(x(t))$ with the polynomial right-hand side
\begin{equation}
\label{eq:lorenz}
f(x) = \big(\sigma(x_2-x_1),x_1(\rho-x_3)-x_2,x_1 x_2 - \beta x_3\big). 
\end{equation}
We fix the standard parameter values of $(\beta,\sigma,\rho)=(8/3,10,28)$, at which an infinite number of unstable POs are embedded in a chaotic attractor. Validated numerics have been used to prove the existence of many POs that wind around the two wings of the attractor in particular sequences, and the periods of these POs have been validated to high precision \cite{barrio2015database}. The shortest PO among these has a period $T\approx1.55865$, whose precise value has been validated to 1000 digits. \Cref{fig:lorenz} shows this PO, which is often called the LR orbit because it winds once each around the `left' and `right' wings of the attractor \citep{sparrow2012lorenz}. This orbit is invariant under the sign symmetry of the Lorenz system, which is 
\begin{equation}
\Lambda=\begin{bmatrix}-1&0&0\\0&-1&0\\0&0&1\end{bmatrix}.
\label{eq:lorenzD}
\end{equation}
The LR orbit is the shortest PO embedded in the chaotic attractor \cite{barrio2015database}. As far as we know, no authors have suggested that POs exist outside of this attractor at the standard parameters, but nor have they ruled out such POs. The bounds we report below confirm that, even if POs exist outside of the attractor, their periods cannot be shorter than the approximate period $T\approx1.55865$ of the LR orbit.

\begin{figure}
    \centering
    \begin{tikzpicture}
\begin{groupplot}[
    group style={group size=2 by 1, horizontal sep=2.0cm, vertical sep=1.8cm},
    width=7.2cm,
    height=5.6cm,
    axis equal image,
    grid=both,
    grid style={line width=.1pt, draw=gray!25},
    major grid style={line width=.2pt, draw=gray!45},
    tick label style={font=\small},
    label style={font=\small},
    title style={font=\small},
    ylabel style={
        rotate=-90,
        xshift=0.4cm
    },
    xmin=-20, xmax=20,
    ymin=-20, ymax=20,
    xtick={-15,0,15},
    ytick={-15,0,15,30,45},
]

\nextgroupplot[
    xlabel={$x_1$},
    ylabel={$x_2$},
]
\addplot+[no marks, thick] table[x index=1, y index=2] {lorenz_LR.dat};

\nextgroupplot[
    xlabel={$x_1$},
    ylabel={$x_3$},
    ymin=5, ymax=45,
]
\addplot+[no marks, thick] table[x index=1, y index=3] {lorenz_LR.dat};

\end{groupplot}
\end{tikzpicture}
\caption{The LR orbit of the Lorenz equations at the standard parameter values for chaos, projected onto two different coordinate planes. Its period of $T\approx1.55865$ is the shortest among all known POs.}
    \label{fig:lorenz}
\end{figure}

\subsubsection{Bounds on periods of general POs}
\label{sec:lorenzgeneral}

This subsection presents lower bounds on the periods of all POs in the Lorenz system, computed by applying \cref{thm:sym reduce}. Before formulating the computations, we rescale $x$ by $25$ to improve numerical conditioning of the resulting SDPs as in~\cite{goluskin2018bounding}. We also rescale $t$ by 3 to obtain an ODE with integer coefficients as in~\cite{parker2024lorenz}. The rescaling of time makes all periods 3 times smaller, so lower bounds for the rescaled system must be multiplied by 3 to apply to the standard formulation~\cref{eq:lorenz}. The rescaled Lorenz equations have right-hand side
\begin{equation}
\label{eq:lorenzrescaled}
f(x)=\big(30(x_2-x_1), 84x_1-75x_1x_3-3x_2, 75x_1 x_2 - 8 x_3\big).
\end{equation}
Although there are many compact sets known to contain all POs~\cite{doering1995shape,goluskin2020bounding}, it suffices to formulate our bounding computations for the domain $\Omega=\mathbb{R}^3$, in which case $I=J=0$ in \cref{eq:sdp equality exploiting sym}.

\Cref{tab:lorenzgeneralbases} summarizes our choices for the polynomial basis vectors appearing in the relation~\cref{eq:sdp equality exploiting sym} that gives the equality constraint of the SDP~\cref{eq:sdp equality exploiting sym}. As explained in \cref{sec:HHgeneral}, we want all basis vectors to vanish at any fixed points, so that the matrices in~\cref{eq:sdp equality exploiting sym} can be strictly positive definite. And, as in \cref{sec:HHgeneral}, this is automatic for the vectors that are Lie derivatives, but we must ensure it for $b^e$ and $b^o$. There is one fixed point at the origin, where both vectors vanish if $b^e$ contains no constant terms. With the Lorenz system rescaled as~\cref{eq:lorenzrescaled}, the other two fixed points are at
\begin{equation}
(x_1,x_2,x_3)=\left(\pm\sqrt{72},\pm\sqrt{72},27\right)/25.
\end{equation}
We ensure vanishing at these points as in \cite{parker2024lorenz}: entries in $b^e$ and $b^o$ consist of even and odd monomials, respectively, multiplied by $25^2x_1^2-72$, $25^2x_2^2-72$ or $25x_3-27$. Monomials are included that yield entries of degree no larger than $d_b$.

\begin{table}[tp]
    \centering
    \renewcommand{\arraystretch}{1.6}
\begin{tabular}{
  >{\centering\arraybackslash}m{1.2cm}
  >{\centering\arraybackslash}m{1.6cm}
  >{\centering\arraybackslash}m{3cm}
  >{\centering\arraybackslash}m{7cm}
}
  \hline Basis & Maximum degree & Purpose & Terms \\\hline
         $a^e$ & $d_a$ & Defining $\Phi(x)$ & $\ld w^e$, where $w^e$ contains even monomials of degrees 1 to $d_a-2$, and $30d_ax_1^{d_a-2}x_3-75x_1^{d_a}$\\
         $a^o$ & $d_a$ & Defining $\Phi(x)$ & $\ld w^o$, where $w^o$ contains odd monomials of degrees 1 to $d_a-2$, and $x_1^{d_a-2k-1}\left(x_2^2+x_3^2\right)^k$ for $k=0,\dots,d_a/2$\\
         $b_0^e$ & $d_b=d_a$ & Enforcing nonnegativity & Even monomials of degrees 1 to $d_b-2$, multiplied by $25^2x_1^2-72$; \newline even monomials of degrees 1 to $d_b-2$, multiplied by $25^2x_2^2-72$; \newline odd monomials of degrees 1 to $d_b-1$, multiplied by $25x_3-27$\\
         $b_0^o$ & $d_b=d_a$ & Enforcing nonnegativity & Odd monomials of degrees 1 to $d_b-2$, multiplied by $25^2x_1^2-72$; \newline odd monomials of degrees 1 to $d_b-2$, multiplied by $25^2x_2^2-72$; \newline even monomials of degrees 1 to $d_b-1$, multiplied by $25x_3-27$\\
         $c$ & $d_c=2d_a$ & Defining $V(x)$ & Even monomials of degrees 1 to $d_c-1$, \newline and $x_1^{d_c-2k}(x_2^2+x_3^2)^k$ for $k=0,\dots,d_c/2$\\\hline
    \end{tabular}
    \caption{Polynomial basis vectors used in the constraint \cref{eq:sdp equality exploiting sym} to bound periods of general POs in the rescaled Lorenz system \cref{eq:lorenzrescaled}.}
    \label{tab:lorenzgeneralbases}
\end{table}

Finally, the entries of $w^e$ and $w^o$ are chosen to obey certain algebraic constraints, which are not needed in the the H\'enon--Heiles example of \cref{sec:HH}. In particular, we would like $a^e=\ld w^e$ to have the same maximum degree $d_a$ as its Lie derivative $\ld a^e$, and likewise for the odd vector $a^o$. This is so that we do not have redundant, nonpositive, high-degree terms on the left-hand side of \cref{eq:sdp equality exploiting sym}. Note that an arbitrary polynomial $p$ of maximum degree $d$ can have a Lie derivative $\ld p$ of maximum degree of $d+1$, since $f$ of the Lorenz system is quadratic. The only way for $p$ and $\ld p$ to have the same maximum degree is if the degree-$(d+1)$ contributions to $\ld p$ cancel each other. As explained and used previously \cite{swinnerton2001bounds,goluskin2018bounding}, such cancellation dictates that the highest-degree terms of $p$ are products of powers of $x_1$ and of $(x_2^2+x_3^2)$. Here we must go one step further because we do not specify $p$ directly; we must specify $w^e$ and $w^o$ such that their first and second derivatives have the same maximum degree $d_a$. It suffices to consider $d_a\geq3$, so that $w^e$ and $w^o$ contain the even and odd $x_n$ components, respectively, as needed to apply \cref{thm:sym reduce}. Components of $w^e$ and $w^o$ with degree $d_a-1$ are restricted to the form
\begin{equation}
    w_i=x_1^{d_a-2k-1}\left(x_2^2+x_3^2\right)^k
    \label{eq:first deriv cancellation}
\end{equation}
for some $k$. These components lead to degree-$(d_a-1)$ terms in $a_i=\ld w_i$, and degree-$d_a$ terms in $\ld a_i$. There is only one degree-$d_a$ component, which belongs to $w^e$ or $w^o$ if $d_a$ is even or odd, respectively. This component is
\begin{equation}
    w_i = 30d_ax_1^{d_a-2}x_3-75x_1^{d_a},
\end{equation}
which gives $x_1^{d_a}$ as the only degree-$d_a$ term in $a_i=\ld w_i$, so that $\ld a_i$ also has degree $d_a$. The basis $c$ used to construct the auxiliary function $V$ has highest-degree terms with a similar special form to~\cref{eq:first deriv cancellation}, exactly following \cite{goluskin2018bounding} to give cancellations in the first derivative only.

\Cref{tab:lorenzgeneral} reports rigorously validated lower bounds on orbit periods that we computed using a version of \cref{alg:general} with the polynomial basis vectors summarized in \cref{tab:lorenzgeneralbases}. In contrast to the examples of \cref{sec:HH}, we do not need to iteratively prune the basis vectors in~\cref{tab:lorenzgeneralbases}. As in the other examples, our best lower bound is sharp to five digits. The computation giving this best bound required around 11 minutes to solve the SDP, followed by roughly one hour for validation. Relative to the bounds on general and symmetric orbits in~\cref{sec:HH}, bounds of similar precision are harder to compute for general orbits of the Lorenz system. This is because the basis vectors are of higher polynomial degree, perhaps because the shortest PO has a more complicated shape. In all cases we chose $d_a=d_b$ and $d_c$ twice as large, so that all terms in \cref{eq:sdp equality exploiting sym} have the same maximum degree.

\begin{table}[tp]
    \centering
\begin{tabular}{
  >{\centering\arraybackslash}m{.7cm}
  >{\centering\arraybackslash}m{.7cm}
  >{\centering\arraybackslash}m{.7cm}
  >{\centering\arraybackslash}m{1cm}
  >{\centering\arraybackslash}m{2.8cm}
  >{\centering\arraybackslash}m{1.4cm}
  >{\centering\arraybackslash}m{2cm}
}        \hline$d_a$&$d_b$&$d_c$&$B$&\textbf{Lower bound on period}&SDP time (s)&Validation time (s)\\\hline
        3 & 3 & 6 & 1128 & \textbf{0.5612}&0.06&0.03\\
        4 & 4 & 8 & 888 & \textbf{0.6325}&0.44&0.62\\
        5 & 5 & 10& 488 & \textbf{0.8532}&4.01&8.98\\
        6 & 6 & 12& 325 & \textbf{\underline{1.}0455}&29.2&79.7\\
        7 & 7 & 14 & 155.9 & \textbf{\underline{1.5}096}&81.5&573\\
        8 & 8 & 16 & 146.26 & \textbf{\underline{1.5586}}&641&3558\\\hline
    \end{tabular}
    \caption{Validated lower bounds on orbit periods for the Lorenz system~\cref{eq:lorenz}, for different maximum degrees $(d_a,d_b,d_c)$ of the polynomial basis vectors described in \cref{tab:lorenzgeneralbases}. Underlined digits are sharp, being saturated by the period $T\approx1.55865$ of the LR orbit shown in \cref{fig:lorenz}. The lower bounds are $6\pi/\sqrt{B}$, which accounts for the fact that $B$ is validated for the SDP~\cref{eq:sos optimisation problem exploiting sym} with the rescaled Lorenz system~\cref{eq:lorenzrescaled}, where time is 3 times faster. Each $B$ is the smallest we could validate to the precision shown. Runtimes on a single core are shown for SDP solutions in quadruple precision arithmetic, and for rational validation.}
    \label{tab:lorenzgeneral}
\end{table}

\subsubsection{Symmetric POs}
\label{sec:lorenzsym}

Since the LR orbit is symmetric, its period is included in the set $\mathcal{P}_\Lambda$, which contains periods of all POs that are invariant under \cref{eq:lorenzD}. (None of these invariant POs are confined to the invariant subspace $\Omega_\Lambda$ where $x_1=x_2$.) Therefore, $\mathcal P_\Lambda$ and the set $\mathcal P$ of all periods should have the same infimum. In contrast to the examples of \cref{sec:HH}, our formulation \cref{thm:symmetric POs} for bounding periods of only $\Lambda$-invariant POs cannot give a bound larger than the period (of the LR orbit) that saturates bounds from the general formulation \cref{thm:general}. Nonetheless we have computed bounds using the formulation \cref{thm:symmetric POs} also. This provides a further test of our formulation for symmetric POs, and it illustrates faster convergence with polynomial degree, albeit with a formulation whose bounds are not guaranteed to apply to non-symmetric orbits.

\Cref{tab:lorenzsymmbases} summarizes the bases we use to compute verified lower bounds on $\mathcal P_\Lambda$. Our approach is as in \cref{sec:lorenzgeneral}, but with \cref{thm:symmetric POs} used in place of \cref{thm:sym reduce}, and with \cref{alg:general} modified to use \cref{rmk: sym SDP}. The $a^o$ vector includes $x_1$ and $x_2$ as components, so that it vanishes only on the invariant subspace $\Omega_\Lambda$, as required by \cref{thm:symmetric POs}.

\begin{table}[tp]
    \centering
    \renewcommand{\arraystretch}{1.3}
\begin{tabular}{
  >{\centering\arraybackslash}m{1.2cm}
  >{\centering\arraybackslash}m{1.5cm}
  >{\centering\arraybackslash}m{3cm}
  >{\centering\arraybackslash}m{7cm}
}
  \hline Basis & Maximum degree & Purpose & Terms \\\hline
         $a^o$ & $d_a$ & Defining $\Phi(x)$ & Odd monomials of degrees 1 to $d_a$\\
         $b^e$ & $d_b$ & Enforcing nonnegativity & Even monomials of degrees 1 to $d_b$\\
         $b^o$ & $d_b$ & Enforcing nonnegativity & Odd monomials of degrees 1 to $d_b$ \\
         $c$ & $d_c=2d_b$ & Defining $V(x)$ & Even monomials of degrees 1 to $d_c-1$, and $x_1^{d_c-2k}(x_2^2+x_3^2)^k$ for $k=0,\dots,d_c/2$\\\hline
    \end{tabular}
    \caption{Polynomial basis vectors used in the constraint \cref{eq:sdp equality exploiting sym} to bound periods of symmetric POs in the rescaled Lorenz system \cref{eq:lorenzrescaled}.}
    \label{tab:lorenzsymmbases}
\end{table}

\Cref{tab:lorenz symm} reports rigorously validated lower bounds on periods of symmetric POs that we computed using a version of \cref{alg:general} with the polynomial basis vectors summarized in \cref{tab:lorenzsymmbases}. As with the bounds for general POs reported in \cref{sec:lorenzgeneral}, the best bound in \cref{tab:lorenz symm} is saturated to 5 digits by the LR orbit. In the present case, however, this is achieved with lower polynomial degrees and much lower computational cost than for general POs (cf.\ \cref{tab:lorenzgeneral}), at the expense of yielding no statement about non-symmetric POs.

\begin{table}[tp]
    \centering
\begin{tabular}{
  >{\centering\arraybackslash}m{.7cm}
  >{\centering\arraybackslash}m{.7cm}
  >{\centering\arraybackslash}m{.7cm}
  >{\centering\arraybackslash}m{1cm}
  >{\centering\arraybackslash}m{2.8cm}
  >{\centering\arraybackslash}m{1.4cm}
  >{\centering\arraybackslash}m{2cm}
}        \hline$d_a$&$d_b$&$d_c$&$B$&\textbf{Lower bound on period}&SDP time (s)&Validation time (s)\\\hline
        2 & 3 &6& 705 & \textbf{0.7099}&0.05&0.03\\
        2 & 4&8 & 310 & \textbf{\underline{1.}0705}&0.31&0.21\\
        3 & 4&8 & 185 & \textbf{\underline{1.}3858}&0.34&0.34\\
        3 &  5&10 & 146.52 & \textbf{\underline{1.55}72}&2.54&2.38\\
        4 & 5&10 & 146.33 & \textbf{\underline{1.558}2}&2.29&2.99\\
        4 &  6&12 & 146.28 & \textbf{\underline{1.558}5}&11.6&14.0\\
        5 &  6 &12& 146.26 & \textbf{\underline{1.5586}}&8.51&20.5\\\hline
    \end{tabular}
    \caption{Validated lower bounds on periods of symmetric POs of the Lorenz system, for different maximum degrees $(d_a,d_b,d_c)$ of the polynomial basis vectors described in \cref{tab:lorenzsymmbases}. Underlined digits are sharp, being saturated by the period $T\approx1.55865$ of the LR orbit. The lower bounds are $6\pi/\sqrt{B}$, which accounts for the fact that $B$ is validated for the SDP~\cref{eq:sos optimisation problem exploiting sym} with the rescaled Lorenz system~\cref{eq:lorenzrescaled}, where time is 3 times faster. Each $B$ is the smallest we could validate to the precision shown. Runtimes on a single core are shown for SDP solutions in quadruple precision arithmetic, and for rational validation.}
    \label{tab:lorenz symm}
\end{table}

\section{Discussion and conclusion}
\label{sec:conc}

We have developed a framework for proving lower bounds on the periods of
POs in autonomous ODEs. This framework combines the idea behind the work of Yorke \cite{yorke1969periods} and its extensions, which are based on Wirtinger's inequality, with methods that use auxiliary functions to bound time averages. The latter addition allows for sharper bounds by making more use of the fact that POs integrate the vector field. This fact typically prevents any PO from, say, attaining the worst-case Lipschitz constant at all times, which is the estimate on which Yorke's bound~\cite{yorke1969periods} is based. For any particular ODE, maximizing lower bounds on PO periods within our framework leads naturally to optimization problems.
For polynomial ODEs, these optimization problems can be relaxed into convex polynomial optimization problems with sum-of-squares constraints, whose optimal values can be computed by numerically solving semidefinite programs. For ODEs with symmetries, we have described how symmetry can be used to save computational cost when solving SDPs, as well as how our framework can be modified so that lower bounds need not apply to non-symmetric POs. Finally, we have described an algorithm for validating the numerical SDP solutions using rational arithmetic, yielding rigorous SOS certificates of numerical lower bounds on periods of POs.

The computations reported in \cref{sec:examples} demonstrate that validated numerics using our methods can give very sharp bounds for POs in examples that display either Hamiltonian or dissipative chaos. In the Hamiltonian H\'enon--Heiles system with \(H\leq 1/6\), our best lower bound for all POs is saturated to five digits by the so-called $\Pi_7$ and $\Pi_8$ orbits at \(H=1/6\). When the formulation is modified to apply only to POs with a sign symmetry, the bounds become larger and instead prove the sharp bound \(T\geq 2\pi\). For the dissipative Lorenz system at the standard parameters, our best validated lower bounds for both general and symmetric POs are saturated to five digits by the well-known PO that transits each wing of the Lorenz attractor once before closing. This PO was known to be the shortest one embedded in the strange attractor, but our bounds also rule out shorter POs outside of this attractor.

Our bounding framework appears to be sharp in all of our examples, in the sense that bounds come arbitrarily close to the true minimal period as SOS computations are repeated using polynomial bases of increasing dimension. However, we have not been able to prove this sharpness for, say, all ODEs with differentiable right-hand sides. The relaxations in the polynomial case to finite-dimensional bases and to SOS conditions are well understood in similar contexts, and their convergence with increasing polynomial degree can be shown by standard arguments under various assumptions. The seemingly harder question is whether our general bounding formulation is sharp at the level of infinite-dimensional auxiliary functions. In particular, one may hope to prove a converse of \cref{thm:general} for a broad class of ODE right-hand sides $f$. The following conjecture is one possible form of such a converse.

\begin{conjecture}
Let $\Omega\subset\mathbb R^n$ be a compact forward-invariant set for a $C^2$ vector field $f$, and let $\mathcal P$ be the nonempty set of all PO periods. Assume that every invariant probability measure not supported on fixed points lies in the weak-* closure of the convex hull of the periodic-orbit measures. Then, for every $B>(2\pi/T^*)^2$ where $T^*>0$ is a lower bound on $\mathcal P$, there exist $m\in\mathbb N$, $\mathsf Q\succ0$, $V\in C^1(\Omega)$, and $w\in C^2(\Omega,\mathbb R^m)$ such that
\begin{equation}
\label{eq:conj}
    B\,a(x)^{\mathsf T}\mathsf Q a(x)
    -
    (\ld a(x))^{\mathsf T}\mathsf Q(\ld a(x))
    +
    \ld V(x)
    \geq 0
    \qquad \forall x\in\Omega,
\end{equation}
where $a=\ld w$, and where $n$ of the components of $w$ are the components of $x$.
\end{conjecture}
The assumption about invariant measures in the conjecture may be needed for the following reason. If \cref{eq:conj} holds, then $B$ is sufficiently large that $\int_\Omega \left[Ba^{\mathsf T}\mathsf Q a-(\ld a)^{\mathsf T}\mathsf Q(\ld a)\right] \,\mathrm d\mu \ge 0$ for every $\mu$ in the convex space of invariant probability measures on $\Omega$, which is compact in the weak-* topology. This includes invariant measures supported on structures other than periodic orbits, such as invariant tori. Such measures could obstruct sharp $B$ values in general, but not if they can be approximated by periodic orbit measures. The assumption is true, for instance, if $\Omega$ is a basic set of an Axiom A flow \citep{sigmund1972space}. However, even if the conjecture can be proved, given a particular $f$ it may be challenging to determine whether the assumption about invariant measures is satisfied.

If an optimal or near-optimal bound $B$ can be certified by finding $(w,\mathsf Q,V)$ that satisfy~\cref{eq:conj}, this certificate also contains information about POs with minimal or near-minimal period. For simplicity, suppose there exists a PO attaining a minimal period of $T^*$, and that \cref{eq:conj} holds with the exactly sharp value $B^*=(2\pi/T^*)^2$. The left-hand side of~\cref{eq:conj} is nonnegative for all $x\in\Omega$, and its time integral over any PO of period $T^*$ is zero, so this expression must vanish pointwise on any such PO. Furthermore, if a bound $B$ and PO are not exactly optimal but are near-optimal, the expression~\cref{eq:conj} must be small on that PO in a quantitative sense~\cite{tobasco2018optimal}. This observation gives a way to search systematically for POs with minimal period. After SOS computations give nearly sharp bounds on PO periods, any $x\in\Omega$ where the left-hand side of~\cref{eq:conj} is relatively small can be a starting point for an iterative method that finds POs, and which may converge to the shortest PO. This approach has been used to find POs that maximize or minimize time averages of various quantities~\cite{tobasco2018optimal,lakshmi2020finding}, although not to find POs of minimal period.

There are several promising directions for generalizing the present work. First, our formulation for bounds on periods of symmetric POs was given only for sign symmetries, but it may well be generalized to other finite symmetry groups by decomposing the
polynomial spaces into irreducible representations \cite{gatermann2004symmetry}. This would enable, for example, bounds applying only to POs possessing the threefold rotational symmetry of the H\'enon--Heiles system studied in \cref{sec:HH}. Second, polynomial optimization methods can be extended to polynomials of other algebraically closed classes of functions, such as rational, trigonometric, or exponential functions, or by using polynomial or rational approximations of non-polynomial vector fields with rigorous error bounds.  Third, the methods presented here for ODEs may be augmented to provide lower bounds on periods of POs in dissipative PDEs. An approach that has succeeded for related questions about PDEs~\cite{goluskin2019bounds,fuentes2022global} is to consider an ODE obtained by projecting the PDE onto a finite-dimensional space, rigorously estimate the differences between this ODE and the original PDE, and incorporating these estimates into SOS optimization formulations like those we have used here. By such a computer-assisted procedure, results like the positive lower bound on periods in the Navier--Stokes equations~\cite{kukavica1994absence} could be made quantitative in a precise way.

\section*{Data availability statement}
Code to produce all the results in this paper is available at \\\url{https://github.com/jeremypparker/ODE_Period_Bounds}.

\section*{Acknowledgements}
The authors wish to thank James Robinson, Warwick Tucker and Ian Tobasco for helpful discussions and references. During this work DG was supported by the NSERC Discovery Grants Program (awards RGPIN-2018-04263 and RGPIN-2025-06823). JP's travel for this collaboration was supported by the QJMAM Fund for Applied Mathematics. ChatGPT 5.5 was used to generate code for converging the periodic orbits discussed in \cref{sec:HH,sec:lorenz}, which are already known in the literature, and for producing \cref{fig:hh,fig:lorenz}.

\bibliographystyle{abbrvnat_nourl}
\bibliography{references.bib}

\end{document}